\newcommand\blfootnote[1]{%
  \begingroup
  \renewcommand\thefootnote{}\footnote{#1}%
  \addtocounter{footnote}{-1}%
  \endgroup
}
\title{Littlewood--Paley--Rubio de Francia inequality \protect\\ for multi-parameter Vilenkin systems}
\author{
Viacheslav Borovitskiy
\blfootnote{
Mathematics Subject Classification: 42C10, 42B25. Secondary: 42B30, 42B20, 60G48.
\emph{Keywords}: Littlewood--Paley inequality, Rubio de Francia inequality, Vilenkin system, Gundy's theorem, martingale, Hardy space, multi-parameter, singular integral operator. 
\indent
The author's present affiliation is ETH Zürich, Zürich, Switzerland.}
\\[0.1cm]
\small{St. Petersburg Department of Steklov Mathematical Institute} \\* \small{of Russian Academy of Sciences (PDMI RAS)}
}
\date{\vspace{-5ex}}
\begin{document}

\maketitle

\begin{abstract}
    A version of Littlewood--Paley--Rubio de Francia inequality for bounded multi-parameter Vilenkin systems is proved: for any family of disjoint sets $I_k = I_k^1 \x \ldots \x I_k^D \subseteq {\Z_+^D}$ such that $I_k^d$ are intervals in $\Z_+$ and a family of functions $f_k$ with Vilenkin--Fourier spectrum inside $I_k$ the following holds:
    $$
    \norm[2]{{\sum}_k f_k}_{L^p} \leq C
    \norm[2]{\del[2]{{\sum}_k \abs{f_k}^2}^{1/2}}_{L^p}
    , \qquad 1 < p \leq 2,
    $$
    where $C$ does not depend on the choice of rectangles $\cbr{I_k}$ or functions~$\cbr{f_k}$.
    This result belongs to a line of studying of (multi-parameter) generalizations of Rubio de Francia inequality to locally compact abelian groups.
    The arguments are mainly based on the atomic theory of multi-parameter martingale Hardy spaces and, as a byproduct, yield an easy-to-use multi-parameter version of Gundy's theorem on the boundedness of operators taking martingales to measurable functions.
    Additionally, some extensions and corollaries of the main result are obtained, including a weaker version of the inequality for exponents $0 < p \leq 1$ and an example of a one-parameter inequality for an exotic notion of the interval.
\end{abstract}

%%%%%%%%%%%%%%%%%%%%%%%%%%%%%%%%%%%%%%%%%%%%%%%%%%
%%%%%%%%%%%%%%%%%%%%%%%%%%%%%%%%%%%%%%%%%%%%%%%%%%
%%%%%%%%%%%%%%%%%%%%%%%%%%%%%%%%%%%%%%%%%%%%%%%%%%
%%%%%%%%%%%%%%%%%%%%%%%%%%%%%%%%%%%%%%%%%%%%%%%%%%
%%%%%%%%%%%%%%%%%%%%%%%%%%%%%%%%%%%%%%%%%%%%%%%%%%
\section{Introduction}
%%%%%%%%%%%%%%%%%%%%%%%%%%%%%%%%%%%%%%%%%%%%%%%%%%
%%%%%%%%%%%%%%%%%%%%%%%%%%%%%%%%%%%%%%%%%%%%%%%%%%
%%%%%%%%%%%%%%%%%%%%%%%%%%%%%%%%%%%%%%%%%%%%%%%%%%
%%%%%%%%%%%%%%%%%%%%%%%%%%%%%%%%%%%%%%%%%%%%%%%%%%
%%%%%%%%%%%%%%%%%%%%%%%%%%%%%%%%%%%%%%%%%%%%%%%%%%

\begin{samepage}
\paragraph{Motivation} In 1985 Rubio de Francia \cite{rubiodefrancia1985} proved that for an arbitrary family of intervals $\cbr{I_k}_{k \in \N}, I_k = [a_k, b_k) \subseteq \R$ with no intersections the following holds:
\[ \label{eqn:rdf_classic}
\norm[2]{\del[2]{{\sum}_{k = 1}^\infty \abs{M_{I_k} f}^2}^{1/2}}_{L^p(\R)}
\lesssim
\norm[2]{f}_{L^p(\R)}
, \qquad f \in L^p(\R), ~~p \geq 2,
\]
where each $M_{I_k}$ is the Fourier multiplier with symbol $\1_{I_k}$ and the sign $\lesssim$ means inequality with an implicit multiplicative constant.
In the modern literature the relation~\eqref{eqn:rdf_classic} is usually called \emph{Littlewood--Paley--Rubio de Francia inequality} or simply \emph{Rubio de Francia inequality}.
\end{samepage}

Rubio de Francia inequality relates the norm of a function with the norm of its ``pieces'' with restricted frequencies.
There are many results of this kind, the very basic one follows directly from the Plancherel theorem:
\[ \label{eqn:plancherel_cor}
\norm[1]{\del[1]{{\sum}_{k = 1}^\infty \abs{M_{I_k} f}^2}^{1/2}}_{L^2(\R)}
=
\norm[1]{f}_{L^2(\R)}
, \qquad f \in L^2(\R),
\]
where $I_k$, with $\cup I_k = \R$, may be arbitrary measurable sets that do not have pairwise intersections.
Since Equation~\eqref{eqn:plancherel_cor} does not hold for $L^p(\R)$ for $p \not= 2$, finer statements are needed to characterize the relationships between the sides of Equation~\eqref{eqn:plancherel_cor} when $L^2(\R)$ is substituted for $L^p(\R)$.
Rubio de Francia inequality is one of such results, and it is similar to and, in a certain sense, more general than yet another result of this kind, the famous Littlewood--Paley inequality.

As is often the case, there is a version of the inequality~\eqref{eqn:rdf_classic} for the circle $\T$ instead of the line $\R$.
To obtain this version we need to substitute $L^p(\R)$ for $L^p(\T)$, take $I_k = [a_k, b_k) = \Set{x \in \Z}{ a_k \leq x < b_k}$ to be arbitrary intervals in $\Z$ with no pairwise intersections and define $M_I f$ through the Fourier series instead of the Fourier transform: $M_{I_k} f = \sum_{n \in I_k} \innerprod{f}{\phi_n} \phi_n$ with $\phi_n(x)= e^{2 \pi i n x}$.

While transferring the inequality~\eqref{eqn:rdf_classic} to the circle is trivial (same proof works without a change), transferring it to other settings may be a difficult problem.
This is the case already for the multi-parameter situation of $\R^D$ and $\T^D$ instead of $\R$ and $\T$ with arbitrary rectangles in place of the intervals.

All the mentioned settings, as well as many others, are instances of an abstract notion of Rubio de Francia inequality defined for arbitrary locally compact abelian groups and partial orderings on their Pontryagin duals~(that induce the notion of an interval or rectangle) detailed in Section~\ref{sec:extensions}.
A natural question to ask is: for which groups with partially ordered duals does the inequality~hold?
This question seems difficult to resolve outright, at least before considering principal examples.
Motivated by this, we prove in this work the analog of the inequality~\eqref{eqn:rdf_classic} for Vilenkin groups giving rise to bounded Vilenkin systems and a natural notion of rectangles on the corresponding dual groups.

In the course of the proof, we verify some auxiliary results, most notably a multi-parameter version of Gundy's theorem and a somewhat sharpened version of it (in the spirit of~\cite{watari1958}).
Beyond proving the main result, we describe a technique which allows one to get similar inequalities for non-standard notions of the rectangle, prove a relevant no-go result and briefly touch the case of $p \leq 1$.

\paragraph{The main result} We prove the version of Rubio de Francia inequality for bounded multi-parameter Vilenkin systems.
First, we restrict our attention to the following formulation of the inequality~\eqref{eqn:rdf_classic} that is equivalent to it by duality:
\[ \label{eqn:rdf_classic_dual}
\norm[2]{{\sum}_{k=1}^\infty f_k}_{L^p(\R)} \lesssim \norm[2]{\del[2]{{\sum}_{k = 1}^\infty \abs{f_k}^2}^{1/2}}_{L^p(\R)}, \quad M_{I_k} f_k = f_k, ~~1 < p \leq 2.
\]
Starting from this formulation, we substitute the real line $\R$ for the $[0,1)^D$, assume that $I_k = I_k^1 \x \dots \x I_k^D$ are products of $D$ arbitrary intervals $I_k^d \subseteq \Z_+$ such that \mbox{$I_k \cap I_l = \emptyset$ for $k \not=l$}, and define operators $M_I$ for $I \subseteq \Z_+^D$ by expression~$M_I f = \sum_{n \in I} \innerprod{f}{\phi_n} \phi_n$, where $\phi_n$ are tensor products of Vilenkin functions.
That is we consider the analog of Rubio de Francia inequality corresponding to the multi-parameter Vilenkin--Fourier series instead of the Fourier transform.
We prove that this inequality does indeed hold for arbitrary bounded Vilenkin systems.
It is formalized in the following statement.

\begin{restatable}{theorem}{vilenkinrdftwodim}
\label{th:main_theorem}
Let $I_k = I_k^1 \x \ldots \x I_k^D \subseteq \Z_+^D$ be a family of disjoint sets such that~$I_k^d$, $d \in \cbr{1, 2, \ldots, D}$, are intervals in $\Z_+$.
Let $f_k: [0,1)^D \to \R$ be some functions with Vilenkin--Fourier spectrum in $I_k$, more precisely
\[
f_k(x) = {\sum}_{(n_1, \ldots, n_D) \in I_k} \innerprod{f_k}{v_{n_1} \otimes \ldots \otimes v_{n_D}} v_{n_1}(x_1) \cdot \ldots \cdot v_{n_D}(x_D),
\]
where $v_{n_d}$ are Vilenkin functions corresponding to a bounded Vilenkin system, possibly different for different values of $d$.
Then
\[
\norm[1]{{\sum}_k f_k}_{L^p} \leq C \norm[1]{\cbr{f_k}}_{L^p(l^2)}, \qquad 1 < p \leq 2,
\]
where $C$ does not depend on the choice of rectangles $\cbr{I_k}$ or functions $\cbr{f_k}$.
\end{restatable}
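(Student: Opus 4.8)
The plan is to reduce this multi-parameter Rubio de Francia inequality to a statement about the boundedness of a vector-valued operator taking martingales to measurable functions, and then apply the promised multi-parameter version of Gundy's theorem. The excerpt explicitly advertises an "easy-to-use multi-parameter version of Gundy's theorem on the boundedness of operators taking martingales to measurable functions" based on "the atomic theory of multi-parameter martingale Hardy spaces," so I expect the heart of the argument to be a clean interpolation-style scheme: establish a weak-type or Hardy-space endpoint estimate at $p=1$ via the atomic decomposition, combine it with the trivial $L^2$ endpoint, and interpolate to obtain the full range $1 < p \le 2$.

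**First I would set up the martingale structure.** The bounded Vilenkin system on $[0,1)^D$ comes with a natural multi-parameter filtration: in each coordinate $d$, the Vilenkin functions $v_{n_d}$ are adapted to an increasing sequence of $\sigma$-algebras generated by the "Vilenkin intervals" (cosets), and the product filtration over the $D$ coordinates gives a multi-parameter martingale structure. The key point is that an interval $I_k^d \subseteq \Z_+$ in the frequency domain corresponds, via the partial-sum (Dirichlet-kernel) projection, to a \emph{difference of two martingale conditional expectations} in coordinate $d$ — this is exactly the feature that makes Vilenkin systems tractable, since the partial sum operators $S_n$ are themselves conditional expectations. Thus the projection $M_{I_k}$ onto a rectangle $I_k = I_k^1 \times \cdots \times I_k^D$ factors as a tensor product of such coordinatewise differences, and the operator $\cbr{f_k} \mapsto \sum_k f_k$ (or rather its linearization with Rademacher-type randomization to handle the $\ell^2$ norm) can be recast as an operator acting on martingales.

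**The main technical step** is proving the endpoint estimate that feeds into Gundy's theorem. I would define the sublinear operator $T \cbr{f_k} = \sum_k f_k$ and verify the two hypotheses of the multi-parameter Gundy theorem: an $L^2 \to L^2$ bound, which follows immediately from orthogonality (the $I_k$ are disjoint, so by Plancherel $\norm{\sum_k f_k}_2^2 = \sum_k \norm{f_k}_2^2 = \norm{\cbr{f_k}}_{L^2(\ell^2)}^2$), and a Hardy-space bound $H^1 \to L^1$ or the corresponding weak-$(1,1)$-type control. The latter is where the \emph{atomic decomposition} does the work: it suffices to test $T$ on atoms of the multi-parameter Hardy space $H^1$, exploiting that each $f_k$ has spectrum confined to the rectangle $I_k$ so that the Dirichlet projections annihilate low-frequency parts of atoms and localize their supports appropriately. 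Establishing the atomic estimate in the genuinely multi-parameter setting — where atoms are \emph{not} supported on rectangles but on arbitrary open sets, and the usual one-parameter Calderón--Zygmund geometry fails — is the step I expect to be the principal obstacle.

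**I would therefore handle the multi-parameter difficulty** precisely as the abstract states, by leaning on the sharpened, "easy-to-use" formulation of Gundy's theorem proved earlier, which should already encapsulate the product-domain atomic estimates and reduce verification to checking the action of $T$ on simple building blocks. Once the endpoint $H^1 \to L^1$ bound and the $L^2 \to L^2$ bound are both in hand, multi-parameter martingale interpolation between $H^1$ and $L^2$ (or between $H^p$ spaces) yields boundedness of $T$ from $L^p(\ell^2)$ into $L^p$ for all $1 < p \le 2$, with a constant independent of the rectangles and the functions since every estimate above is uniform in $\cbr{I_k}$. The boundedness of the Vilenkin system enters only to guarantee uniform $L^\infty$-control of the relevant Dirichlet kernels and hence the uniformity of the constants; the disjointness of the $I_k$ enters through the $L^2$ orthogonality. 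This completes the proof of Theorem~\ref{th:main_theorem}.
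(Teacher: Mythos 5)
There is a genuine gap, and it sits exactly at the step you pass over most quickly. You assert that an arbitrary frequency interval $I_k^d \subseteq \Z_+$ corresponds, via the partial-sum projection, to a difference of two martingale conditional expectations in coordinate $d$, ``since the partial sum operators are themselves conditional expectations.'' This is false for general intervals: for a Vilenkin system the $n$-th partial sum operator is a conditional expectation only when $n = P_m$ is a product of the first $m$ radices, while the projection onto an arbitrary interval $[a,b)$ is given by a Dirichlet-type kernel that is not a martingale operator at all. Consequently the operator $\cbr{f_k} \mapsto \sum_k f_k$ does not satisfy the localization hypothesis (condition 2) of the multi-parameter Gundy theorem, and your proposed direct application of the Gundy/atomic machinery to it cannot be carried out. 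The whole difficulty of the theorem is concentrated in bridging the mismatch between arbitrary rectangles in $\Z_+^D$ and the rigid blocks $\delta_n$ on which the martingale structure lives.

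The paper bridges it with two ingredients absent from your plan. First, a digit-wise partition (Section~\ref{sec:partition}, due to Tselishchev) of each interval $[a,b)$ into pieces $\tilde{J}_j$ and $J_j$ which, after translation by $\dot{-}\,a$ or $\dot{-}\,b$ in the Vilenkin frequency group, become unions of the blocks $\delta_{j,l}$; in dimension $D$ this is applied coordinatewise, decomposing each rectangle $I_k$ into $2^D$ families of sub-rectangles anchored at its $2^D$ corners. Second, modulation: multiplying the corresponding pieces of $f_k$ by $v_{a_k}^{-1}$ (and its analogues for the other corners) shifts their spectra onto genuine martingale blocks, after which $\sum_k f_k$ is realized as the auxiliary operator $G$ of Lemma~\ref{thm:G_is_bounded} applied to the modulated pieces, and the resulting square function is controlled by the modified square function $S_m$ of Lemma~\ref{thm:modified_s_bounded}. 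Both lemmas are indeed proved by the Gundy-type corollary, so your overall picture of an $L^2$ bound plus a Hardy-space endpoint plus interpolation is the right engine --- but it is applied to these two auxiliary operators, not to the Rubio de Francia operator itself, and without the partition-and-modulation reduction the engine has nothing to act on.
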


\paragraph{Prior work} Our result stands in line with a number of extensions and generalizations of the original inequality of Rubio de Francia.

Most of the extensions are dedicated to the trigonometric setting with $M_I$ defined through the usual Fourier transform or Fourier series.
For instance, inequality~\eqref{eqn:rdf_classic_dual} was proved first for $p = 1$ by Bourgain~\cite{bourgain1985} and then for arbitrary indices $p \leq 1$ by Kislyakov and Parilov~\cite{kislyakov2005}.
The multi-parameter version of inequalities \eqref{eqn:rdf_classic} and \eqref{eqn:rdf_classic_dual} was proved by Journe~\cite{journe1985} and later simplified by Soria~\cite{soria1987}, while Osipov proved the multi-parameter version of the inequality~\eqref{eqn:rdf_classic_dual} for arbitrary indices $p \leq 1$~\cite{osipov2010a, osipov2010b}.
Some weighted extensions~\cite{kislyakov2008, borovitskiy2021weighted} also exist in the literature along with versions of the result for non-Lebesgue norms~\cite{malinnikova2019,ho2021,ho2022}.
A survey on some of these and other similar results may be found in the~review~\cite{lacey2003}.

As for the alternative Fourier transforms, we mention the paper~\cite{osipov2017} of Osipov, which addressed the version of inequality~\eqref{eqn:rdf_classic_dual} for the one-parameter Walsh--Fourier series, and the works of Tselishchev~\cite{tselishchev2021} and the author~\cite{borovitskiy2022littlewood}, which are dedicated to the one-parameter Vilenkin case and the two-parameter Walsh case respectively, and which we build upon and generalize in this work.\footnote{Going from the one-parameter Walsh setting \cite{osipov2017} the one-parameter Vilenkin setting \cite{tselishchev2021} turns out to be a much harder problem than it may seem, requiring a substantially different combinatorial argument. Furthermore, as in the case of multi-parameter singular integral operators, going from the one-parameter setting to the two-parameter setting \cite{borovitskiy2022littlewood} is involved, while going further to the arbitrary number of parameters is even more so (cf. e.g. \cite{fefferman1986}). In~doing that, this paper brings the considerations of papers \cite{osipov2017, tselishchev2021, borovitskiy2022littlewood} to a natural end.}
It is important to note that we only consider \emph{bounded} Vilenkin systems in this paper, with proofs heavily relying on the boundedness.
Questions related to \emph{unbounded} Vilenkin systems are also studied in the literature (see e.g. \cite{watari1958, young1976}) with techniques and results differing considerably.
We refer the reader to \cite{tselishchev2022}---where a version of the Rubio de Francia inequality has recently been proven for unbounded one-parameter Vilenkin systems---for a relevant discussion.

\paragraph{Proof concept} The scheme of the proof can be described as follows.
Generalizing the Tselishchev's geometric argument~\cite{tselishchev2021} to the multi-parameter case we are able to reduce the main theorem to the problem of boundedness of certain multi-parameter operators similar in spirit to the multi-parameter singular integrals.
By using the theory of multi-parameter martingale Hardy spaces and their atomic decomposition formulated by Weisz \cite{weisz2013}, we prove the multi-parameter analog of the Gundy's theorem on boundedness of operators taking martingales to measurable functions (Theorem~\ref{thm:gundy_Dd}).
This theorem, due to its simple assumptions, may be of independent interest outside the scope of this work.
Adapting our version of Gundy's theorem to the problem at hand (this yields a somewhat sharpened version of it---Corollary~\ref{thm:gundy_cor}) and identifying the integrable functions with martingales, we prove that the required operators are indeed bounded.

\paragraph{Outline} The paper's structure does not quite follow the proof concept described above.
First, in Section~\ref{sec:prelim} we define the notions required later in the paper and briefly describe the existing theory of multi-parameter martingale Hardy and Lebesgue spaces.
After this, in Section~\ref{sec:gundy} we prove a version of Gundy's theorem and its corollary that is useful for our particular scenario.
In Section~\ref{sec:operators} we use Gundy's theorem to prove boundedness of the two operators key to the main theorem.
Then, in Section~\ref{sec:partition} we recall the one-parameter partitioning argument of Tselishchev to subdivide an arbitrary interval into subintervals with favorable properties.
We use this partition argument and two lemmas from Section~\ref{sec:operators} to prove the main theorem in Section~\ref{sec:proof} by leveraging additional geometric and combinatorial considerations.
In Section~\ref{sec:extensions} we discuss some of the corollaries and the extensions of the main theorem, including a weaker version of the Rubio de Francia inequality for multi-parameter Vilenkin system for exponents $0 < p \leq 1$ and a version of the one-parameter Rubio de Francia inequality for exotic sets in place of the intervals.
We also show how to disprove the analogous inequality for partitions of $\Z_+^D$ into arbitrary sets. 

\paragraph{Notation}
By $\Z_+$ we mean $\N \cup \cbr{0}$ (we assume $0 \not\in \N$).
By rectangles we mean the arbitrary-dimensional product of intervals in $\Z_+$ themselves defined by $[a, b) = \Set{x \in \Z_+}{ a \leq x < b}$.
In cases when a martingale $f$ is of form $f_n = \E_n g$ for some measurable function $g$, we often identify $f$ and $g$ and use the same letter for both of them. 
Notation $a \lesssim b$ means that $a \leq C b$ for some positive constant $C$ that is not important for the discussion. 
For $n, m \in \Z_+^D$ we write $n \leq m$ ($n < m$) when $n_d \leq m_d$ ($n_d < m_d$) for all $1 \leq d \leq D$.
For $n \in \Z_+^D$ we write $n-\v{1}$ to denote the vector with components $\del{n-\v{1}}_d = \max(n_d - 1, 0)$, $d = 1, \ldots, D$.
We use the Fraktur font to distinguish Lebesgue and Hardy spaces of martingales from the respective spaces of functions, i.e. $\c{H}^p$ and $\c{L}^p$ are the martingale Hardy and Lebesgue spaces, while $H^p$ and $L^p$ are the usual Hardy and Lebesgue spaces of (equivalence classes of) functions\footnote{Strictly speaking, the elements of $H^p$ may be distributions, but we do not dwell on this.}.
The symbols $\c{H}^p$, $\c{L}^p$ (resp. $H^p$, $L^p$) correspond to the spaces of martingales (resp. functions) on $[0,1)$ or $[0,1)^D$, depending on the~context.

%%%%%%%%%%%%%%%%%%%%%%%%%%%%%%%%%%%%%%%%%%%%%%%%%%
%%%%%%%%%%%%%%%%%%%%%%%%%%%%%%%%%%%%%%%%%%%%%%%%%%
%%%%%%%%%%%%%%%%%%%%%%%%%%%%%%%%%%%%%%%%%%%%%%%%%%
%%%%%%%%%%%%%%%%%%%%%%%%%%%%%%%%%%%%%%%%%%%%%%%%%%
%%%%%%%%%%%%%%%%%%%%%%%%%%%%%%%%%%%%%%%%%%%%%%%%%%
\section{Preliminaries} \label{sec:prelim}
%%%%%%%%%%%%%%%%%%%%%%%%%%%%%%%%%%%%%%%%%%%%%%%%%%
%%%%%%%%%%%%%%%%%%%%%%%%%%%%%%%%%%%%%%%%%%%%%%%%%%
%%%%%%%%%%%%%%%%%%%%%%%%%%%%%%%%%%%%%%%%%%%%%%%%%%
%%%%%%%%%%%%%%%%%%%%%%%%%%%%%%%%%%%%%%%%%%%%%%%%%%
%%%%%%%%%%%%%%%%%%%%%%%%%%%%%%%%%%%%%%%%%%%%%%%%%%

Here we introduce the preliminary theory.
First, we discuss Vilenkin systems of one and many parameters.
Then, we define the corresponding martingale Lebesgue and Hardy spaces and present the theory that allows to establish boundedness of a certain class of operators taking martingales to measurable functions.
Further in the paper this preliminary theory will be used to prove a multi-parameter version of Gundy's theorem and, afterwards, to prove key lemmas for the main theorem.

Although, strictly speaking, the theory of $l^2$-valued functions and martingales will be needed to prove said lemmas, in this section we present only the scalar-valued results.
The reason for that is to simplify notation: while the theory generalizes to the $l^2$-valued case in a straightforward manner, the notation for the generalization ends up being too cumbersome.
We will discuss transferring the scalar-valued results to the $l^2$-valued case once more in Section~\ref{sec:prelim_l2_val}.

%%%%%%%%%%%%%%%%%%%%%%%%%%%%%%%%%%%%%%%%%%%%%%%%%%
\subsection{Vilenkin system}
\label{sec:prelim_vilenkin}
%%%%%%%%%%%%%%%%%%%%%%%%%%%%%%%%%%%%%%%%%%%%%%%%%%

Vilenkin systems are orthonormal bases in $L^2 [0,1)$ that generalize the classical Walsh system \cite{vilenkin1947, walsh1923}.
Similar to the Walsh system, Vilenkin systems correspond to some procedure of dividing the interval $[0,1)$ into subintervals, only instead of dividing each interval in two at every step, Vilenkin systems arise from dividing each interval into a variable number $p_n \geq 2$ of subintervals at $n$-th step.
To every sequence $\v{p} = \cbr{p_n}_{n \in \N}$, $p_n \geq 2$ there corresponds a different Vilenkin system.
The Vilenkin system corresponding to $p_n \equiv 2$ is the Walsh system.
Let us assume $\v{p}$ fixed and denote $P_n = p_1 \cdot \ldots \cdot p_n$, $P_0 = 1$ for convenience.
We will use terms \emph{Vilenkin function} and \emph{Walsh function} to refer to an element of some Vilenkin system or an element of the Walsh system respectively.

\paragraph{Mixed-radix numerical system} Apart from the procedure of dividing the interval $[0, 1)$ into subintervals, a sequence $\v{p}$ defines a mixed radix numerical system, meaning that each $x \in [0, 1)$ and each $n \in \Z_+$ have representation of form
\<
\label{eqn:vilenkin_decomposition_real}
x &= \sum_{k=1}^\infty x_k/P_k, &\t{~where} 0 \leq x_k < p_k \t{and} x_k \t{are integers,}
\\
\label{eqn:vilenkin_decomposition_integer}
n &= \sum_{k=1}^\infty n_k \cdot P_{k-1}, &\t{~where} 0 \leq n_k < p_k \t{and} n_k \t{are integers.}
\>
We will sometimes call $x_k$ (resp. $n_k$) in the expressions above the \emph{digits} of $x$ (resp. $n$) in the mixed radix system $\v{p}$.
Note that representation~\eqref{eqn:vilenkin_decomposition_real} is not unique.\footnote{This representation is not unique in the exact same way as the representation of a real number as an infinite decimal fraction is not unique.
In fact, the decimal representation is the special case of~\eqref{eqn:vilenkin_decomposition_real} with $p_n \equiv 10$.
}
This is easy to fix though: if there are two representations of form~\eqref{eqn:vilenkin_decomposition_real}, choosing the one with $x_k \to 0$ makes the representation unique \cite{weisz2013}.

\paragraph{Vilenkin functions} After this preparation, let us define the generalized Ra\-de\-ma\-cher functions, the main building blocks of the Vilenkin functions.
\begin{definition}
The $n$-th \emph{generalized Rademacher function} $r_n: [0, 1) \to \C$ is
\[ \label{eqn:rademacher_definition}
r_n(x) = \exp(2 \pi i x_n / p_n),
\]
where $x_n$ are digits of $x$ in the mixed radix system $\v{p}$ and $n \in \N$.
\end{definition}

And now we are ready to define the Vilenkin functions.

\begin{definition}
The $n$-th \emph{Vilenkin function} $v_n: [0,1) \to \C$ is defined by
\[ \label{eqn:vilenkin_definition}
v_n (x) = \prod_{k=1}^\infty r_k(x)^{n_k},
\]
where $n_k$ are digits of $n$ in the mixed radix system $\v{p}$ and $n \in \Z_+$.
\end{definition}
The Vilenkin system $\cbr{v_n}_{n \in \Z_+}$ is an orthonormal basis of the space $L^2 [0,1)$~\cite{vilenkin1947}.

\paragraph{Group structure} Vilenkin systems also arise as characters of certain topological groups called \emph{Vilenkin groups}.
Let us explore this characterization formally.
\begin{definition}
The set $G_{\v{p}}$ of sequences $\cbr{x_k}_{k \in \N}$ such that $x_k \in \Z_+$ and ${0 \leq x_k < p_k}$ together with operation $\bullet_{\v{p}}: G_{\v{p}} \x G_{\v{p}} \to G_{\v{p}}$ of bit-wise addition modulo $\v{p}$ defined formally by
\[
\cbr{x \bullet_{\v{p}} y}_k = x_k + y_k \bmod p_k
\]
is called the \emph{Vilenkin group}.
\end{definition}
\begin{remark}
The above is trivially equivalent to defining Vilenkin group as the infinite direct product of cyclic groups
\[
G_{\v{p}} = \prod_{k=1}^\infty \del{\Z/p_k \Z}
.
\]
Assuming the discrete topology for cyclic groups, we can endow the Vilenkin group with the standard product topology.
\end{remark}
Through decomposition~\eqref{eqn:vilenkin_decomposition_real} the elements of $G_{\v{p}}$ can be identified with numbers from $[0,1)$.
Moreover, this can be done by a measure preserving transform, which maps the Haar measure of $G_{\v{p}}$ into the Lebesgue measure over $[0,1)$ \cite{vilenkin1947}.

The characters of the group $G_{\v{p}}$, if its elements are thought of as functions over $[0,1)$, are exactly the Vilenkin functions.
Furthermore, since characters of a group themselves form a group, there is a natural group structure for Vilenkin system.
It is given by identifying each Vilenkin function $v_n$ with a non-negative integer $n \in \Z_+$ and considering the operation $\dot{+}: \Z_+ \x \Z_+ \to \Z_+$ such that
\[
\del[2]{\sum_{k=1}^\infty \alpha_k P_{k-1}}
\dot{+}
\del[2]{\sum_{k=1}^\infty \beta_k P_{k-1}}
=
\sum_{k=1}^\infty (\alpha_k + \beta_k \bmod p_k) P_{k-1}
.
\]
This means that for $n, m \in \Z_+$, we have $v_n (x) v_m (x) = v_{n \dot{+} m} (x)$.
As a group then, Vilenkin system is isomorphic to the direct sum (not product) of cyclic groups $\bigoplus_{k=1}^\infty \del{\Z/p_k \Z}$.
See \cite{vilenkin1947, weisz2013} and references therein for the proofs and comments on the claims made here.

%%%%%%%%%%%%%%%%%%%%%%%%%%%%%%%%%%%%%%%%%%%%%%%%%%
\subsection{Multi-parameter Vilenkin system}
\label{sec:prelim_multiparam_vilenkin}
%%%%%%%%%%%%%%%%%%%%%%%%%%%%%%%%%%%%%%%%%%%%%%%%%%

Let $D \in N$.
A $D$-parameter Vilenkin system is a product of $D$ (different) Vilenkin systems.
\begin{definition}
Consider $D$ Vilenkin systems $\cbr[1]{v^{(d)}_n}_{n \in \Z_+}$, $d = 1, \ldots, D$ corresponding to (possibly different) sequences $\v{p}_d$.
Then a $D$-parameter family of functions~$\cbr{v_n}_{n \in \Z_+^D}$, where $v_n: [0,1)^D \to \C$ are defined by
\[
v_n = v^{(1)}_{n_1} \otimes \ldots \otimes v^{(D)}_{n_D}
&&
\text{i.e.}
&&
v_n (x) = v^{(1)}_{n_1}(x_1) \cdot \ldots \cdot v^{(D)}_{n_D}(x_D),
\]
is called a \emph{$D$-parameter Vilenkin system}.
\end{definition}

A $D$-parameter Vilenkin system is a group with the product group structure.
That is, if we identify the $D$-parameter Vilenkin functions with elements of $\Z_+^D$, the group operation for some elements $n, m \in \Z_+^D$ is given by
\[
n \dot{+} m = (n_1 \dot{+} m_1, n_2 \dot{+} m_2, \ldots, n_D \dot{+} m_D)
.
\]
With this we have $v_n (x) v_m(x) = v_{n \dot{+} m}(x)$ as in the one-parameter case.

\paragraph{Vilenkin--Fourier series} A multi-parameter Vilenkin system is always an orthonormal basis of the space $L^2 [0,1)^D$.
Hence, any square-integrable function~$g$ may be represented as a sum of the \emph{Vilenkin-Fourier series}
\[
g(x) = {\sum}_{l \in \Z_+^D} c_l \cdot v_l(x)
\qquad
\t{with coefficients} c_l = \innerprod{g}{v_l}_{L^2 [0,1)^D}.
\]
These $c_l$ can be called \emph{the Vilenkin--Fourier coefficients}, while the set of indices ${\spec g = \Set{l \in \Z_+^D}{c_l \not= 0}}$ can be called \emph{the Vilenkin--Fourier spectrum}.

\paragraph{Isomorphism type} A $D$-parameter Vilenkin system is always group-isomorphic to some one-parameter Vilenkin system, for instance to one with
\[
\v{p} = (p^{(1)}_1, .., p^{(D)}_1, p^{(1)}_2, ..,  p^{(D)}_2, .. ).
\]
This does not mean that every result for $D$-parameter Vilenkin systems follows trivially from the analogous result for one-parameter systems, and the Rubio de Francia inequality is an example of this.
On contrary, this will allow us to deduce some curious one-parameter results from the $D$-parameter one.
We study this connection in more detail in Section~\ref{sec:extensions}.

%%%%%%%%%%%%%%%%%%%%%%%%%%%%%%%%%%%%%%%%%%%%%%%%%%
\subsection{Multi-parameter Vilenkin martingales}
\label{sec:prelim_vilenkin_martingales}
%%%%%%%%%%%%%%%%%%%%%%%%%%%%%%%%%%%%%%%%%%%%%%%%%%

A combination of sequences $\v{p}_1, \ldots, \v{p}_D$ defines a \emph{generalized filtration} that is deeply connected with the $D$-parameter Vilenkin system.
\begin{definition}
The \emph{$D$-parameter Vilenkin filtration} corresponding to a combination of sequences $\v{p}_1, \ldots, \v{p}_D$ is the family $\cbr{\c{F}_{n}}_{n \in \Z_+^D}$ of $\sigma$-algebras
\[
\c{F}_{n}
=
\sigma
\del{
    \cbr{
        \srbr{\frac{l_1}{P^{(1)}_{n_1}}, \frac{l_1+1}{P^{(1)}_{n_1}}}
        \x
        \ldots
        \x
        \srbr{\frac{l_D}{P^{(D)}_{n_D}}, \frac{l_D+1}{P^{(D)}_{n_D}}}
        :
        0 \leq l_d < P^{(d)}_{n_d}
    }
}
,
\]
where $\sigma(\c{H})$ denotes the $\sigma$-algebra generated by elements of set $\c{H}$.
\end{definition}
We call $\srbr[1]{l_d/P^{(d)}_{n_d}, (l_d+1)/P^{(d)}_{n_d}} \subseteq [0,1)$ \emph{Vilenkin intervals} and their products contained in $[0,1)^D$ \emph{Vilenkin rectangles} by analogy with the dyadic intervals and the dyadic rectangles.

Define operator $\E_{n}$~to be the conditional expectation with respect to $\c{F}_{n}$.
For $n, m \in \Z_+^D$ we write $n \leq m$ when $n_d \leq m_d$ for all $d = 1, \ldots, D$.
With this, we are able to define $D$-parameter Vilenkin martingales.

\begin{definition}
A family $f = \cbr{f_{n}}_{n \in \Z_+^D}$ of integrable functions over $[0,1)^D$ is a \emph{$D$-parameter Vilenkin martingale} if
\1 $f_n$ is measurable with respect to $\sigma$-algebra $\c{F}_{n}$ for every $n \in \Z_+^D$,
\2 $\E_n f_m = f_n$ for every $n, m$ such that $n \leq m$.
\0
\end{definition}
Any integrable function $g: [0, 1)^D \to \C$ defines a martingale $\cbr{\E_n g}_{n \in \Z_+^D}$.

\begin{samepage}
\paragraph{Martingale differences} To define the martingale differences with respect to a generalized filtration (as above), let us note that the operator $\E_n$ can be represented as the composition
$
\E_{n} = \E_{n_1}^1 \E_{n_2}^2 \ldots \E_{n_D}^D
$
of its one-parameter counterparts $\E_{n_d}^d$.
Here $\E_{n_d}^d$ can be formally defined as the projection onto the $\sigma$-algebra $\c{F}^d_{n_d} = \cup_{k \in \Z_+^D, k_d = n_d} \c{F}_k$.
The martingale difference $\Delta_n$ operator acting on an integrable function $g$ is defined to be
\[
\Delta_{n} g
=
\Delta_{n_1}^1
\Delta_{n_2}^2
\ldots
\Delta_{n_D}^D
g
, \quad \text{where} \quad
\Delta_{n_d}^d = {\E}_{n_d}^d - {\E}_{n_d-1}^d
\]
and where for $n_d-1 = -1$ we assume ${\E}_{n_d-1}^d = 0$.
\end{samepage}
If we expand the expression above, we also get the representation
\[ \label{eqn:difference_definition}
\Delta_{n} g = {\sum}_{\varepsilon_1, \ldots, 
\varepsilon_D \in \cbr{0, 1}} (-1)^{\varepsilon_1 + \ldots + \varepsilon_D} {\E}_{n_1 - \varepsilon_1, \ldots, n_D - \varepsilon_D} g
,
\]
where, similarly, the terms containing index $n_d - 1 = -1$ are assumed to be zero.
For instance, for $d = 2$ we get
\[
\begin{aligned} 
\Delta_{n} g
:&=
({\E}_{n_1}^1 - {\E}_{n_1-1}^1)
({\E}_{n_2}^2 - {\E}_{n_2-1}^2)
g
\\
&=
{\E}_{n_1, n_2} g - {\E}_{n_1 - 1, n_2} g - {\E}_{n_1, n_2 - 1} g + {\E}_{n_1 - 1, n_2 - 1} g
.
\end{aligned}
\]
Operators $\E_n$ and $\Delta_n$ can be extended to act on martingales by simply putting $\E_{n} g = g_n$ and treating~\eqref{eqn:difference_definition} as the definition of $\Delta_n g$.
Obviously, we have $\E_n g = \sum_{m \leq n} \Delta_m g$ both for functions and martingales.

\paragraph{Connection to the Vilenkin system} The fundamental connection between the $D$-parameter Vilenkin martingales and the $D$-parameter Vilenkin system is given by the following two equations \cite{weisz2013}:
\<
\label{eqn:expectation_vilenkin_sum}
\del[1]{{\E}_{n} f} (x)
&=
\sum\limits_{l_1 = 0}^{P^{(1)}_{n_1} - 1}
    ...
    \sum\limits_{l_D = 0}^{P^{(D)}_{n_D} - 1}
        \innerprod{f_m}{v_l(\cdot)}
        \ 
        v_l(x)
&
&
\t{with}
m \geq n,
\\
\label{eqn:difference_vilenkin_sum}
\del{\Delta_{n} f} (x)
&=
\sum\limits_{l \in \delta_{n}}
        \innerprod{f_m}{v_l(\cdot)}
        \ 
        v_l(x)
&
&
\t{with}
m \geq n,
\>
where $f$ is a $D$-parameter Vilenkin martingale, $\innerprod{\cdot}{\cdot}$ is the inner product in the space $L^2 [0,1)^D$, and
$
\delta_{n}
=
[P^{(1)}_{n_1-1}, P^{(1)}_{n_1})
\x
..
\x
[P^{(D)}_{n_D-1}, P^{(D)}_{n_D})
\subseteq
\Z_+^D
,
$
with~$P^{(d)}_{-1} = 0$.

%%%%%%%%%%%%%%%%%%%%%%%%%%%%%%%%%%%%%%%%%%%%%%%%%%
\subsection{Lebesgue and Hardy spaces of Vilenkin martingales}
\label{sec:prelim_spaces}
%%%%%%%%%%%%%%%%%%%%%%%%%%%%%%%%%%%%%%%%%%%%%%%%%%

Here we define the Lebesgue and Hardy spaces of $D$-parameter Vilenkin martingales and discuss their very basic properties.

\paragraph{Bounded Vilenkin systems} Throughout this and the following sections we consider only the \emph{bounded Vilenkin systems}: the ones that correspond to such sequences $\v{p} = \cbr{p_n}_{n \in \N}$ that $p_n \leq C_{\v{p}}$ for some global $C_{\v{p}} > 0$.
Bounded Vilenkin systems correspond to \emph{regular} (generalized) filtrations, for which there exists a global $C > 0$ such that for any non-negative martingale $f$ the relation $f_n \leq C f_{n-1}$ holds in one-parameter case or relations
\[
\begin{aligned}
f_{n_1, n_2, \ldots, n_D} &\leq C f_{n_1 - 1, n_2, \ldots, n_D},
\\
f_{n_1, n_2, \ldots, n_D} &\leq C f_{n_1, n_2 - 1, \ldots, n_D},
\\
&\dots
\\
f_{n_1, n_2, \ldots, n_D} &\leq C f_{n_1, n_2, \ldots, n_D - 1}
\end{aligned}
\]
hold in the $D$-parameter case.
The regularity is an important assumption for a filtration used in the atomic theory of Hardy spaces, that we briefly describe in Section~\ref{sec:prelim_boundedness}.

\paragraph{Lebesgue spaces} We say that a martingale $f$ lies in the Lebesgue space $\c{L}^p$ and write $f \in \c{L}^p$, for some $0 < p \leq \infty$, if $f_n \in L^p$ for all $n \in \Z_+^D$ and
\[
\norm{f}_{\c{L}^p} = \sup\limits_{n \in \Z_+^D} \norm{f_n}_{L^p} < \infty.
\]
As we have mentioned before, to every integrable function there corresponds a martingale.
For a martingale $f \in \c{L}^p$ for $1 < p < \infty$ the converse is also true: there exists a function $g \in L^p$ such that $f_n = \E_n g$ and
\[
\lim\limits_{\min(n_1, \ldots, n_D) \to \infty} \norm{f_n - g}_{L^p} = 0
,
\qquad
\qquad
\qquad
\norm{f}_{\c{L}^p} = \norm{g}_{L^p}
.
\]
Following the common practice, we will often identify martingale $f$ with the function $g$ and refer to $g$ by the same symbol $f$, treating it as a function or a martingale depending on the particular context.

\paragraph{Hardy spaces} There exists a number of different definitions of martingale Hardy spaces.
In the most general setting, they may result in different objects, but for regular filtrations all of them are equivalent \cite{weisz2006, weisz2013}.
We thus use the definition most convenient for us.
First, we introduce the \emph{martingale Littlewood--Paley square function} $S$ that is given by
\[
    S(f) = \del[2]{{\sum}_{n \in \Z_+^D} \abs{\Delta_n f}^2}^{1/2}.
\]
It defines the martingale Hardy spaces $\c{H}^p$, $0 < p \leq \infty$, the spaces of multiparameter martingales $f$ such that $\norm{f}_{\c{H}^p} = \norm{S f}_{L^p} < \infty$.

For $1 < p < \infty$ we have (cf. \cite{weisz2013}) that $\norm{S(u)}_{L^p} \sim \norm{u}_{\c{L}^p}$.
Thus, for ${1 < p < \infty}$, the Lebesgue space $\c{L}^p$ coincides with the Hardy space $\c{H}^p$.
This does not hold for $p \leq 1$.
Hardy spaces serve, in the context of space interpolation, as the natural extension of the Lebesgue scale for exponents $p \leq 1$.
In particular, one can prove that if an operator $T$ is bounded from $\c{H}^{p_0}$ to $L^{p_0}$ and from $\c{L}^{p_1}$ to $L^{p_1}$ for some $p_0 \leq 1 < p_1$, then $T$ is also bounded from $\c{L}^{p}$ to $L^{p}$ for $1 < p \leq p_1$.

This provides an efficient mean of proving boundedness of an operator on $\c{L}^p$ for all $1 < p \leq 2$ simultaneously, by reducing it to the problem of boundedness from $\c{L}^2$ to $L^2$, which is relatively simple, and to the problem of boundedness from $\c{H}^{p_0}$ to $L^{p_0}$ for some $p_0 \leq 1$.

Since establishing boundedness of the certain operator for all $1 < p \leq 2$ will be central for the proof of the main theorem, and since the tools for checking boundedness from $\c{L}^2$ to $L^2$ are classical and fairly simple, we proceed to describe the machinery that helps proving boundedness of operators from $\c{H}^{p_0}$ to $L^{p_0}$.

%%%%%%%%%%%%%%%%%%%%%%%%%%%%%%%%%%%%%%%%%%%%%%%%%%
\subsection{Atomic decomposition and operator boundedness}
\label{sec:prelim_boundedness}
%%%%%%%%%%%%%%%%%%%%%%%%%%%%%%%%%%%%%%%%%%%%%%%%%%

Here we discuss the atomic decomposition of the multi-parameter martingale Hardy spaces and a result that proves boundedness of a certain class of operators which map martingales from Hardy spaces to functions from Lebesgue spaces.

\paragraph{Atomic decomposition} Elements of Hardy spaces can be represented as weighted sums of simple functions called \emph{atoms}.
We give the precise definition of \emph{atoms} later, as it differs depending on the number of parameters or the kind of Hardy spaces being used (martingale/non-martingale).

\begin{theorem}[Atomic decomposition]
A martingale $f$ is in $\c{H}^p$ $(0 < p \leq 1)$ if and only if there exists a sequence $a_m$ of relatively simple martingales called $p$-atoms (to be defined in the future) and a sequence $\mu_m$ of real numbers such that $f = \sum_{m=0}^{\infty} \mu_m a_m$, formally meaning that
\<
& \sum_{m=0}^{\infty} \mu_m {\E}_n a_m \, \stackrel{a.e.}{=} \, {\E}_n f \t{ for all } n,
&
& \sum_{m=0}^{\infty} \abs{\mu_m}^p < \infty.
\>
Moreover, $\norm{f}_{\c{H}^p} \sim \inf \del{\sum_{m=0}^{\infty} \abs{\mu_m}^p}^{1/p}$, where the infimum is taken over all representations of martingale $f$ of the above form.
\end{theorem}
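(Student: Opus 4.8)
The plan is to establish the two implications separately and to read off the quantitative norm equivalence in the ``moreover'' clause as a byproduct of each construction. The single fact I would use repeatedly is that for $0 < p \leq 1$ the functional $f \mapsto \norm{f}_{\c{H}^p}^p = \norm{Sf}_{L^p}^p$ is $p$-subadditive: the square function $S$ is sublinear in the $\ell^2$ sense, so $S(g+h) \leq S(g) + S(h)$ pointwise, and $\norm{g+h}_{L^p}^p \leq \norm{g}_{L^p}^p + \norm{h}_{L^p}^p$ whenever $p \leq 1$.

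For the direction that an atomic sum lies in $\c{H}^p$, the key observation is that every $p$-atom $a_m$ obeys $\norm{a_m}_{\c{H}^p} \leq 1$ uniformly. This is immediate from the defining properties an atom is expected to have: its square function $S(a_m)$ is supported on an open set $F_m$ (with respect to the product filtration) and is normalized by $\norm{S(a_m)}_{L^\infty} \leq \mu(F_m)^{-1/p}$, so that $\norm{S(a_m)}_{L^p}^p \leq \mu(F_m)^{-1}\mu(F_m) = 1$. Combining this with $p$-subadditivity and $S(\sum_m \mu_m a_m) \leq \sum_m \abs{\mu_m} S(a_m)$ yields $\norm{f}_{\c{H}^p}^p \leq \sum_m \abs{\mu_m}^p \norm{a_m}_{\c{H}^p}^p \leq \sum_m \abs{\mu_m}^p$, which is one half of the norm equivalence and involves no multi-parameter subtlety.

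The substantive direction is the construction of a decomposition from a given $f \in \c{H}^p$, which I would organize around the dyadic level sets of the square function. For each integer $k$ set $F_k = \Set{x}{S(f)(x) > 2^k}$, and let $f^{(k)}$ denote the martingale obtained by truncating the differences $\Delta_n f$ at the stage where the corresponding Vilenkin rectangle enters $F_k$. Writing $f = \sum_k (f^{(k+1)} - f^{(k)})$ and setting $b_k = f^{(k+1)} - f^{(k)}$, one checks that the truncation forces $S(b_k)$ to be supported in $F_k$ and bounded by a fixed multiple of $2^k$. Hence $a_k := b_k/\mu_k$ with $\mu_k := c\,2^k \mu(F_k)^{1/p}$ is a $p$-atom, and by the layer-cake formula $\sum_k \abs{\mu_k}^p \sim \sum_k 2^{kp}\mu(F_k) \sim \norm{S(f)}_{L^p}^p = \norm{f}_{\c{H}^p}^p$, supplying the reverse inequality.

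The main obstacle is precisely this truncation step in dimension $D > 1$. In one parameter the level set $\cbr{S_n(f) > 2^k}$ defines a genuine stopping time and $f^{(k)}$ is the unambiguous stopped martingale, with atoms supported on single Vilenkin intervals; but $\Z_+^D$ is only partially ordered, so no such stopping time exists, and the region ``below'' $F_k$ must be encoded through the open set $F_k$ together with its maximal Vilenkin-rectangle structure, whence the atoms live on open sets rather than rectangles. Two verifications are delicate: that each truncated $b_k$ is still a martingale for the product filtration, and that $\norm{S(b_k)}_{L^\infty}$ does not overshoot $2^k$ by more than a bounded factor. The latter is exactly where regularity of the filtration (equivalently, boundedness of the Vilenkin system) enters, since it bounds the jump of the square function across a single refinement of any one coordinate. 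For these technical points I would follow the multi-parameter construction of Weisz \cite{weisz2013} rather than reprove it from scratch.
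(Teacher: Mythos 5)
The paper offers no argument of its own here---its ``proof'' is the citation of Theorems 1.14 and 1.16 of \cite{weisz2013}---so the only meaningful comparison is between your sketch and the proof behind that citation, and in substance they coincide: the forward direction via $p$-subadditivity of $\norm{\cdot}_{\c{H}^p}^p$ plus a uniform $\c{H}^p$ bound on atoms, and the converse via a Calder\'on--Zygmund-type decomposition along the level sets $F_k = \cbr{S(f) > 2^k}$, with the multi-parameter truncation (where no genuine stopping time exists) encoded through the open sets $F_k$ exactly as Weisz does. Since you explicitly defer the two delicate verifications to the same source the paper cites, your proposal is an outline of the cited proof rather than an independent one, which matches the paper's own level of treatment. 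One inaccuracy is worth correcting: the bound $\norm{S(a_m)}_{L^\infty} \leq \abs{F_m}^{-1/p}$ is \emph{not} among the defining properties of a $p$-atom in this paper (only the $L^2$ normalization $\norm{a}_{L^2} \leq \abs{F}^{1/2-1/p}$ and its refinement 3(c) are), so ``immediate from the defining properties'' is not right as stated; such an $L^\infty$ bound is an output of the converse construction, not an axiom. The correct route to $\norm{a}_{\c{H}^p} \leq 1$ is: the cancellation conditions 3(b) force every $\E_n a_R$, hence every $\Delta_n a$, to be supported in $F$ (the same support computation that appears in the proof of Theorem~\ref{thm:gundy_Dd}), after which H\"older's inequality on $F$ together with $\norm{S(a)}_{L^2} = \norm{a}_{L^2} \leq \abs{F}^{1/2-1/p}$ gives $\norm{S(a)}_{L^p}^p \leq \abs{F}^{1-p/2}\,\abs{F}^{p/2-1} = 1$.
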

\begin{proof}
Theorems 1.14 and 1.16 of \cite{weisz2013}.
\end{proof}
Regardless of what atoms actually are, we can easily show that the behavior of operator on them alone determines its boundedness in Hardy spaces.
\begin{proposition}
Let $B$ be some Banach space.
A linear operator $T: \c{H}^p \to B$ is bounded if and only if for any atom $a$, we have $\norm{T a}_{B} \leq C$ for some constant~$C$ that is independent of $a$.
\end{proposition}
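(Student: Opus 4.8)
The plan is to read off both implications directly from the Atomic decomposition theorem just stated; the only substantive ingredient is the elementary inequality $\del{\sum_m \abs{\mu_m}}^p \le \sum_m \abs{\mu_m}^p$, valid precisely because $0 < p \le 1$, which converts $l^p$-summability of the coefficients into $l^1$-summability and is exactly what makes the atomic method work in the range $p \le 1$.

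For the ``only if'' direction I would first observe that atoms have uniformly bounded Hardy norm. Indeed, an atom $a$ is its own (one-term) atomic decomposition $a = 1 \cdot a$, so the norm equivalence $\norm{a}_{\c{H}^p} \sim \inf \del{\sum_m \abs{\mu_m}^p}^{1/p}$ gives $\norm{a}_{\c{H}^p} \lesssim 1$ for every atom. Hence, if $T$ is bounded, then $\norm{Ta}_B \le \norm{T} \norm{a}_{\c{H}^p} \lesssim \norm{T}$, uniformly in $a$, which is the desired conclusion.

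For the ``if'' direction, fix $f \in \c{H}^p$ and choose a near-optimal decomposition $f = \sum_m \mu_m a_m$ with $\del{\sum_m \abs{\mu_m}^p}^{1/p} \le 2 \norm{f}_{\c{H}^p}$. Granting for the moment that $T$ may be applied term by term, linearity and the triangle inequality in $B$ yield
\[
\norm{Tf}_B \le \sum_m \abs{\mu_m}\, \norm{Ta_m}_B \le C \sum_m \abs{\mu_m} \le C \del{\sum_m \abs{\mu_m}^p}^{1/p} \le 2C \norm{f}_{\c{H}^p},
\]
where the third inequality is the $l^p \hookrightarrow l^1$ estimate above. This gives boundedness of $T$ with $\norm{T} \le 2C$.

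The single point needing care --- and the main obstacle, such as it is --- is the termwise identity $Tf = \sum_m \mu_m Ta_m$, which for a merely linear $T$ cannot be assumed, since it interchanges $T$ with the $\c{H}^p$-limit defining the series. I would resolve this by passing through the partial sums $S_N = \sum_{m \le N} \mu_m a_m$: linearity gives $TS_N = \sum_{m \le N} \mu_m Ta_m$ exactly, while the absolute convergence $\sum_m \abs{\mu_m} \norm{Ta_m}_B \le C \del{\sum_m \abs{\mu_m}^p}^{1/p} < \infty$ together with completeness of $B$ shows that $(TS_N)$ is norm-convergent in $B$, with limit of norm at most $2C\norm{f}_{\c{H}^p}$. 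Identifying this limit with $Tf$ is precisely the assertion that $T$ is the unique bounded extension of its restriction to the dense span of atoms; for the concrete $\sigma$-sublinear operators to which this proposition is applied later, the identity may simply be replaced by the unconditional estimate $\norm{Tf}_B \le \sum_m \abs{\mu_m} \norm{Ta_m}_B$, which requires no continuity at all and makes the argument fully self-contained.
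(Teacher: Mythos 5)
Your proof is correct and follows essentially the same route as the paper's: the one-term decomposition $a = 1\cdot a$ combined with the norm equivalence for the ``only if'' direction, and the $p$-subadditivity of the norm (equivalently the $l^p\hookrightarrow l^1$ embedding for $p\le 1$) applied to a near-optimal atomic decomposition for the ``if'' direction. You are in fact more careful than the paper on one point---the termwise identity $Tf=\sum_m \mu_m\, Ta_m$, which the paper uses silently---and your observation that for the concrete operators to which this is later applied the unconditional estimate $\norm{Tf}_B\le\sum_m\abs{\mu_m}\norm{Ta_m}_B$ suffices is the right way to close that gap.
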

\begin{proof}
Assume first that $\norm{T a}_{B} \leq C$ for every atom $a$.
Consider the atomic decomposition $f = \sum_{m=0}^{\infty} \mu_m a_m$. Since $p \leq 1$, we have $\del{a + b}^p \leq a^p + b^p$, thus
\[
\norm{T f}_{B}^p
\leq
\sum_{m=0}^{\infty} \abs{\mu_m}^p \norm{T a_m}_{B}^p
\leq 
C^p \sum_{m=0}^{\infty} \abs{\mu_m}^p.
\]
By taking decompositions of $f$ with $\del{\sum_{m=0}^{\infty} \abs{\mu_m}^p}^{1/p}$ close to the infimum over all such representations, we see that $\norm{T f}_{B} \leq C \norm{f}_{\c{H}^p}$.

Now assume that $T$ is bounded.
Since atoms lie in $\c{H}^p$, for any atom $a$ we have $\norm{T a}_{B} \lesssim \norm{a}_{\c{H}^p}$.
From the relation $\norm{f}_{\c{H}^p} \sim \inf \del{\sum_{m=0}^{\infty} \abs{\mu_m}^p}^{1/p}$ it is obvious that the norm of all atoms is bounded.
Hence the converse.
\end{proof}
Intuitively, atoms form the core of the Hardy spaces.
Thus it should come as not surprise that with Hardy spaces getting more complex (with the number of parameters $D$ getting larger), the atoms become more complex as well.
For the one-parameter case, atoms are given by the following definition.
\begin{definition}
A function $a \in L^2 [0,1)$ (regarded as a martingale) is a one-parameter $p$-atom if the following holds:
\begin{enumerate}
    \item $\supp a \subseteq I$ for some Vilenkin interval $I \subseteq [0, 1)$,
    \item $\norm{a}_{L^2} \leq \abs{I}^{1/2-1/p}$,
    \item $\int_{[0,1)} a(x) \d x = 0$.
\end{enumerate}
\end{definition}
For two and more parameters, the atoms are much more complex objects.
To define them we need the notion of a \emph{maximal Vilenkin rectangle}.
If $F \subseteq [0, 1)^D$ is open, then a Vilenkin rectangle $R \subseteq F$ is called maximal if and only if $R \subseteq R' \subseteq F$ implies $R' = R$ for any Vilenkin rectangle $R'$.
The set of all maximal Vilenkin rectangles is denoted by $\c{M}(F)$.
\begin{definition}
A function $a \in L^2 [0,1)^D$ (regarded as a martingale) is a $D$-parameter $p$-atom for $D \geq 2$ if the following holds:
\begin{enumerate}
    \item $\supp a \subseteq F$ for some open set $F \subseteq [0, 1)^D$,
    \item $\norm{a}_{L^2} \leq \abs{F}^{1/2-1/p}$,
    \item $a$ can be further decomposed into the sum $a = \sum_{R \in \c{M}(F)} a_R$ of simpler functions $a_R \in L^2$ called \emph{elementary particles} such that
    \begin{enumerate}
        \item $\supp a_R \subseteq R \subseteq F$, where $R \in \c{M}(F)$ is a maximal Vilenkin rectangle,
        \item for all $1 \leq d \leq D$ and arbitrary $x_1, .., x_{d-1}, x_{d+1}, .., x_D \in [0,1)$
        \[
            \int_{[0,1)} a_R(x_1, .., x_d, .., x_D) \d x_d = 0
            &&
            \text{i.e.}
            &&
            {\E}_{0}^d a_R = 0
            ,
        \]
        \item for every disjoint partition $\cbr{\c{P}_l}$ of $\c{M}(F)$,
        \[
            \del[2]{\sum_{l} \norm[1]{\sum_{R \in \c{P}_l} a_R}_{L^2}^2}^{1/2} \leq \abs{F}^{1/2-1/p}
            .
        \]
    \end{enumerate}
\end{enumerate}
\end{definition}

While in the one-parameter case establishing boundedness of operators by checking $\norm{T a} \leq C$ for all atoms is a reasonable approach, in multi-parameter case this approach is hindered by the complexity of atoms.
It turns out though that there exist simpler sufficient conditions for boundedness of operators.
These are motivated by one of the most basic results of classical singular integrals theory, namely the following.
\begin{theorem}
    If $T: L^2(\R) \to L^2(\R)$ is a bounded linear operator and for any function $a$ supported on an interval $I$ satisfying $\int_{\R} a(x) \d x = 0$ we have
    \[
        \int_{\R \setminus I^{(r)}} \abs{T a}(x) \d x \lesssim \norm{a}
        ,
    \]
    then $T$ is bounded from $L^p$ to $L^p$ for $1 < p \leq 2$.
    Here $I^{(r)}$ is the interval of length $2^r \abs{I}$ that has the same center as $I$.
\end{theorem}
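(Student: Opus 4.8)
The plan is to prove the standard weak-type $(1,1)$ estimate for $T$ and then obtain the full range $1 < p \le 2$ by Marcinkiewicz interpolation against the assumed $L^2 \to L^2$ bound. Since $L^1 \cap L^2$ is dense in $L^p$ for $1 < p < 2$, it suffices to argue for $f \in L^1 \cap L^2$ and extend by density. Throughout I read the unadorned norm $\norm{a}$ in the hypothesis as the $L^1$ norm, which is the quantity that the weak-$(1,1)$ argument requires.

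First I would fix $f$ and a height $\lambda > 0$ and perform the Calder\'on--Zygmund decomposition of $f$ at level $\lambda$: writing the open set $\{ M f > \lambda\}$ as a Whitney union of pairwise disjoint dyadic intervals $Q_j$, one splits $f = g + b$ with $b = \sum_j b_j$, where each $b_j$ is supported on $Q_j$, has mean zero, and satisfies $\norm{b_j}_{L^1} \lesssim \lambda \abs{Q_j}$, while the good part obeys $\norm{g}_{L^\infty} \lesssim \lambda$ and $\norm{g}_{L^1} \le \norm{f}_{L^1}$; moreover $\sum_j \abs{Q_j} \lesssim \norm{f}_{L^1}/\lambda$. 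The key point is that each $b_j$ is precisely a mean-zero function supported on an interval, so it falls within the class of $a$'s to which the hypothesis applies.

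For the good part I would invoke the $L^2$ boundedness of $T$ together with Chebyshev's inequality:
\[
\abs{\{ \abs{Tg} > \lambda/2\}} \lesssim \lambda^{-2} \norm{Tg}_{L^2}^2 \lesssim \lambda^{-2}\norm{g}_{L^2}^2 \lesssim \lambda^{-2}\norm{g}_{L^\infty}\norm{g}_{L^1} \lesssim \norm{f}_{L^1}/\lambda .
\]
For the bad part, let $\Omega^* = \bigcup_j Q_j^{(r)}$ be the union of the dilated intervals; since each dilation multiplies length by the fixed factor $2^r$, we still have $\abs{\Omega^*} \le 2^r \sum_j \abs{Q_j} \lesssim \norm{f}_{L^1}/\lambda$. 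On $\Omega^*$ I bound the level set trivially by $\abs{\Omega^*}$. Off $\Omega^*$ the stated cancellation condition gives $\int_{\R \setminus Q_j^{(r)}} \abs{T b_j}\,\d x \lesssim \norm{b_j}_{L^1}$ for each $j$, and summing yields
\[
\int_{\R \setminus \Omega^*} \abs{Tb}\,\d x \le \sum_j \int_{\R \setminus Q_j^{(r)}} \abs{T b_j}\,\d x \lesssim \sum_j \norm{b_j}_{L^1} \lesssim \lambda \sum_j \abs{Q_j} \lesssim \norm{f}_{L^1} .
\]
Chebyshev then gives $\abs{\{ x \notin \Omega^* : \abs{Tb(x)} > \lambda/2\}} \lesssim \norm{f}_{L^1}/\lambda$, and combining the good and bad estimates produces $\abs{\{\abs{Tf} > \lambda\}} \lesssim \norm{f}_{L^1}/\lambda$, i.e.\ $T$ is of weak type $(1,1)$.

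The individual steps are routine; the only place requiring genuine care — and the place where the stated hypothesis enters essentially — is matching the geometry of the Calder\'on--Zygmund exceptional set to the dilation $I^{(r)}$ appearing in the assumption. Because the dilation factor $2^r$ is fixed, enlarging each Whitney interval $Q_j$ to $Q_j^{(r)}$ costs only a bounded multiple in total measure, so the exceptional set remains of size $O(\norm{f}_{L^1}/\lambda)$ and the cancellation hypothesis may be applied term by term off it. Once the weak-$(1,1)$ bound is in hand, Marcinkiewicz interpolation with the $L^2$ estimate closes the range $1 < p < 2$, while $p = 2$ is the hypothesis itself.
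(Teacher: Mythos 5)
Your argument is correct, but it takes a different route from the paper's. The paper disposes of this statement in one line: it is ``a simple consequence of the atomic decomposition'' --- i.e.\ one checks that $T$ maps $p_0$-atoms (mean-zero functions $a$ supported on an interval $I$ with $\norm{a}_{L^2}\leq \abs{I}^{1/2-1/p_0}$) to functions with uniformly bounded $L^{p_0}$ norm, using the hypothesis off $I^{(r)}$ and Cauchy--Schwarz together with the $L^2$ bound on $I^{(r)}$; this gives $T\colon H^{p_0}\to L^{p_0}$ for some $p_0\leq 1$, and interpolation with $L^2$ closes the range $1<p\leq 2$. You instead run the classical Calder\'on--Zygmund decomposition to get the weak $(1,1)$ bound and apply Marcinkiewicz interpolation. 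Both are standard and valid; your reading of the unadorned $\norm{a}$ as $\norm{a}_{L^1}$ is the right one (it is the scale-invariant choice, and your route genuinely needs it, since the bad pieces $b_j$ are controlled only in $L^1$; the atomic route would tolerate either normalization). What each approach buys: yours is more elementary and self-contained, avoiding Hardy spaces entirely; the paper's yields the stronger endpoint conclusion $T\colon H^{p_0}\to L^{p_0}$ for $p_0\leq 1$, and --- crucially for the rest of the paper --- it is the only one of the two that survives the passage to the multi-parameter setting of Theorems~\ref{thm:quasi_local_2d} and~\ref{thm:quasi_local_bounded}, where the Calder\'on--Zygmund/weak-$(1,1)$ strategy breaks down (the strong maximal function is not of weak type $(1,1)$, and the exceptional-set geometry requires Journ\'e's covering lemma). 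The only points you gloss are routine: the interchange $Tb=\sum_j Tb_j$ (justified by $L^2$ convergence of $\sum_j b_j$ for $f\in L^1\cap L^2$ and Fatou), and the fact that the hypothesis is applied with the single fixed dilation parameter $r$.
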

This theorem is a simple consequence of the atomic decomposition for the one-parameter case.
R. Fefferman \cite{fefferman1986} found out that a close analog of this statement works in two-parameter trigonometric case, while Weisz \cite{weisz1997, weisz2013} adapted his ideas to show the same in the martingale case.

Prior to formulating the theorem for $2$-parameter Vilenkin martingales, we define the analog of $I^{(r)}$ for Vilenkin intervals and rectangles.
For a Vilenkin interval $I \subseteq [0,1)$ we define $I^{(1)}$ to be the Vilenkin interval such that $I \subseteq I^{(1)}$, $I \not= I^{(1)}$ and such that any other Vilenkin interval with these properties contains the interval $I^{(1)}$ (the left ends of $I$ and $I^{(1)}$ coincide). We define $I^{(r)}$ to be the Vilenkin interval obtained by applying this procedure $r$ times.
For a Vilenkin rectangle $R = R^1 \x \ldots \x R^D$ we put $R^{(r)} = \del[0]{R^1}^{(r)} \x \ldots \x \del[0]{R^D}^{(r)}$.
\begin{theorem} \label{thm:quasi_local_2d}
If $T: \c{L}^2 [0,1)^2 \to L^2 [0,1)^2$ is a bounded linear operator and for some $p_0 \leq 1$ there exists $\eta > 0$ such that for any function $a$ supported on a Vilenkin rectangle $R$ and satisfying $\norm{a}_{L^2} \leq \abs{R}^{1/2-1/p_0}$, $\int_0^1 a(x_d) \d x_d = 0$ for $d = 1, 2$ we have for any $r \in \N$
\[ \label{eqn:quasi_local_2d}
    \int_{[0,1)^2 \setminus R^{(r)}} \abs{T a}^{p_0}(x) \d x \leq C 2^{\eta r}
    ,
\]
then $T: \c{H}^{p} \to L^{p}$ is bounded for $p_0 \leq p \leq 2$. Furthermore, since $\c{H}^p$ and $\c{L}^p$ coincide for $1 < p \leq 2$, we have $T: \c{L}^p \to L^p$ bounded in this case.
\end{theorem}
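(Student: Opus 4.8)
The plan is to reduce everything to the endpoint $T:\c{H}^{p_0}\to L^{p_0}$ and then interpolate. Exactly as in the proof of the Proposition above, the $p_0$-subadditivity of the $L^{p_0}$-quasinorm (valid since $p_0\le 1$) gives $\norm{Tf}_{L^{p_0}}^{p_0}\le\sum_m\abs{\mu_m}^{p_0}\norm{Ta_m}_{L^{p_0}}^{p_0}$ for an atomic decomposition $f=\sum_m\mu_m a_m$, so it suffices to prove a \emph{uniform} bound $\norm{Ta}_{L^{p_0}}\le C$ over all $2$-parameter $p_0$-atoms $a$ (arguing first on finite atomic sums, on which $T$ acts termwise, and passing to the limit). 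Granting this, $T:\c{H}^{p_0}\to L^{p_0}$ is bounded; together with the hypothesis $T:\c{L}^2=\c{H}^2\to L^2$, interpolation of martingale Hardy spaces yields $T:\c{H}^p\to L^p$ for $p_0\le p\le 2$, and the coincidence $\c{H}^p=\c{L}^p$ on $1<p\le 2$ gives the stated $\c{L}^p\to L^p$ bound there.

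So fix a $p_0$-atom $a$ supported on an open set $F\subseteq[0,1)^2$ with elementary-particle decomposition $a=\sum_{R\in\c{M}(F)}a_R$, and let $\tilde F=\cbr{M_s\1_F>1/2}$, where $M_s$ is the (dyadic) strong maximal function adapted to the Vilenkin rectangles. Since $\1_F$ is an indicator, the $L\log L$ bound for $M_s$ gives $\abs{\tilde F}\lesssim\abs{F}$. I split $\int_{[0,1)^2}\abs{Ta}^{p_0}$ into the near part $\int_{\tilde F}$ and the far part $\int_{[0,1)^2\setminus\tilde F}$. The near part is handled by the $\c{L}^2\to L^2$ hypothesis and H\"older: since $p_0\le 2$, $\int_{\tilde F}\abs{Ta}^{p_0}\le\abs{\tilde F}^{1-p_0/2}\norm{Ta}_{L^2}^{p_0}\lesssim\abs{\tilde F}^{1-p_0/2}\norm{a}_{L^2}^{p_0}\lesssim\abs{\tilde F}^{1-p_0/2}\abs{F}^{p_0/2-1}\lesssim 1$, using $\norm{a}_{L^2}\le\abs{F}^{1/2-1/p_0}$ and $\abs{\tilde F}\lesssim\abs{F}$.

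The whole difficulty is the far part. Here $p_0$-subadditivity gives $\int_{[0,1)^2\setminus\tilde F}\abs{Ta}^{p_0}\le\sum_{R}\int_{[0,1)^2\setminus\tilde F}\abs{Ta_R}^{p_0}$. For each maximal rectangle $R$ let $r_R$ record how many enlargement steps keep $R^{(r_R)}$ inside $\tilde F$, so that $[0,1)^2\setminus\tilde F\subseteq[0,1)^2\setminus R^{(r_R)}$; boundedness of the Vilenkin system keeps $\abs{R^{(r)}}\le C_{\v{p}}^{2r}\abs{R}$ comparable to $\abs{R}$. Rescaling the hypothesis --- stated for rectangle functions normalised by $\abs{R}^{1/2-1/p_0}$ --- and applying it to $a_R$ yields $\int_{[0,1)^2\setminus R^{(r_R)}}\abs{Ta_R}^{p_0}\lesssim 2^{\eta r_R}\norm{a_R}_{L^2}^{p_0}\abs{R}^{1-p_0/2}$. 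Summing over $R$ and applying H\"older with the conjugate exponents $2/p_0$ and $2/(2-p_0)$ separates the two kinds of factors: the $\norm{a_R}_{L^2}$-factors are tamed by condition (c) of the atom at the finest partition, $\sum_R\norm{a_R}_{L^2}^2\le\abs{F}^{1-2/p_0}$, while the geometric factors collapse (since $(1-p_0/2)\cdot\frac{2}{2-p_0}=1$) to a sum of the form $\sum_R 2^{\eta' r_R}\abs{R}$. The latter is precisely the quantity governed by Journe's covering lemma for maximal Vilenkin rectangles, and assembling the two factors should give the far part $\lesssim\abs{F}^{p_0/2-1}\cdot\abs{F}^{1-p_0/2}=1$.

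The crux, and the step I expect to be hardest, is controlling $\sum_R 2^{\eta' r_R}\abs{R}$. This is the genuine multi-parameter obstacle: the maximal rectangles in $\c{M}(F)$ can overlap so severely that $\sum_R\abs{R}$ is \emph{not} comparable to $\abs{F}$, so a direct summation fails and one must weight each rectangle by its depth $r_R$ inside $\tilde F$. Journe's covering lemma supplies the decisive summability $\sum_R\abs{R}\,\gamma_R^{-\delta}\lesssim_{\delta}\abs{F}$ for \emph{every} $\delta>0$, where $\gamma_R\sim 2^{r_R}$ measures the enlargement; the delicate point is to make the hypothesis exponent $\eta$, the depth weights $r_R$, Journe's decay exponent $\delta$, and the $\ell^2$-normalisation from condition (c) balance against one another. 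It is exactly in this balancing that the strong-maximal enlargement $\tilde F$, the regularity of the filtration, and the atom's partition structure are all used at once; once it is in place, the near and far estimates combine to give $\norm{Ta}_{L^{p_0}}\lesssim 1$ uniformly, completing the reduction. This scheme is the two-parameter specialisation of R. Fefferman's argument \cite{fefferman1986} in the martingale form of Weisz \cite{weisz1997, weisz2013}.
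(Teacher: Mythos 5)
The paper does not actually prove this theorem: its ``proof'' is a pointer to Theorem~1.41 of \cite{weisz2013} (and to \cite{weisz2004} for the Vilenkin case). What you have written is a reconstruction of the argument behind that citation --- R.~Fefferman's rectangle-atom method in Weisz's martingale form --- and the architecture is the correct one: reduce to a uniform $L^{p_0}$ bound on $p_0$-atoms via $p_0$-subadditivity, enlarge the support $F$ to $\tilde F$ by the strong maximal function, handle the near part with the $\c{L}^2\to L^2$ hypothesis and H\"older, and handle the far part particle-by-particle using the rectangle hypothesis together with Journ\'e's covering lemma. Your near-part computation, the H\"older split with exponents $2/p_0$ and $2/(2-p_0)$, and the use of condition (c) of the atom to get $\sum_R\norm{a_R}_{L^2}^2\le\abs{F}^{1-2/p_0}$ are exactly the standard steps, and applying the hypothesis to the elementary particles $a_R$ is legitimate since they are supported on single Vilenkin rectangles and have cancellation in each variable.

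Two things keep this from being a proof rather than a plan. First, the decisive step is exactly the one you leave at the level of ``should give'': you must identify your depth $r_R$ with the enlargement parameter $\gamma_R$ in Journ\'e's lemma, verify the lemma for maximal Vilenkin rectangles of a regular (non-dyadic) filtration, and then actually carry out the balancing $\bigl(\sum_R 2^{-2\eta r_R/(2-p_0)}\abs{R}\bigr)^{1-p_0/2}\lesssim\abs{F}^{1-p_0/2}$; without this the multi-parameter difficulty (that $\sum_R\abs{R}$ is not comparable to $\abs{F}$) is named but not overcome. Second, a sign problem: the hypothesis as printed reads $\int_{[0,1)^2\setminus R^{(r)}}\abs{Ta}^{p_0}\le C2^{\eta r}$ with $\eta>0$, a bound that \emph{grows} in $r$ (surely a typo for $2^{-\eta r}$, compare Theorem~\ref{thm:quasi_local_bounded}); you carry the growing factor $2^{\eta r_R}$ into the far part and then ask Journ\'e's decay $\gamma_R^{-\delta}$ to absorb it, which it cannot --- $\sum_R\abs{R}\gamma_R^{+\delta}$ is not controlled by $\abs{F}$ for any $\delta>0$. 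The argument only closes once the hypothesis is read with the decaying exponent.
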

\begin{proof}
Theorem 1.41 of \cite{weisz2013} contains the proof for the Walsh system, the proof for bounded Vilenkin systems is the same.
The formal statement for bounded Vilenkin systems can be found in \cite[Theorem 13]{weisz2004}, though without an explicit proof.
\end{proof}
The proof for the two-parameter case is based upon the ingenious Journe's covering lemma.
For $D \geq 3$, the corresponding statement is no longer true \cite{weisz2013}. Instead, a more complicated criterion exists, based on a version of Journe's covering lemma formulated by Pipher \cite{pipher1986}.
Before stating the assertion, let us consider a reformulation of the $2$-parameter criterion.
It is obvious that if $R = I \x J$, then instead of~\eqref{eqn:quasi_local_2d} we can ask (denoting $(I^{(r)})^c = [0,1) \setminus I^{(r)}$ and analogously $(J^{(r)})^c = [0,1) \setminus J^{(r)}$) for
\[
    \int_{(I^{(r)})^c \x [0, 1)} \abs{T a}^{p_0}(x) \d x \leq C 2^{\eta r}
    \t{~and~}
    \int_{[0, 1) \x (J^r)^c} \abs{T a}^{p_0}(x) \d x \leq C 2^{\eta r}
    .
\]
Or even, we can ask
\[
    \begin{aligned} \label{eqn:quasi_local_2d_reformulated}
    \int_{(I^{(r)})^c \x J} \abs{T a}^{p_0}(x) \d x &\leq C 2^{\eta r},
    &
    \int_{I \x (J^{(r)})^c} \abs{T a}^{p_0}(x) \d x &\leq C 2^{\eta r},
    \\
    \int_{(I^{(r)})^c \x J^c} \abs{T a}^{p_0}(x) \d x &\leq C 2^{\eta r},
    &
    \int_{I^c \x (J^{(r)})^c} \abs{T a}^{p_0}(x) \d x &\leq C 2^{\eta r}.
    \end{aligned}
\]
The general multi-parameter criterion will generalize this form of condition~\eqref{eqn:quasi_local_2d}.
Let us present the formal statement now.
\begin{theorem} \label{thm:quasi_local_bounded}
Let $D \geq 2$ and $0 < p_0 < 2$.
If $T: \c{L}^2 [0,1)^D \to L^2 [0,1)^D$ is a bounded linear operator and there exist $\eta_1, \ldots, \eta_D > 0$ such that for every simple $p_0$-atom $a$ (to be defined) the following holds.
If $a$ is (up to a permutation of coordinates) supported on $I_1 \x \ldots \x I_j \x A$, $j < D$, where $I_i \subseteq [0,1)$ are Vilenkin intervals and $A \subseteq [0,1)^{D-j}$ is a measurable set, then for every $r_1, \ldots, r_j \in \N$
\[ \label{eqn:quasi_local_Dd_1}
\int_{\del[1]{I_1^{(r_1)}}^c \x \ldots \x \del[1]{I_j^{(r_j)}}^c \x A } \abs{T a}^{p_0} (x) \d x \leq C 2^{-\eta_1 r_1} \cdot \ldots \cdot 2^{-\eta_j r_j}.
\]
If $j = D - 1$ and $A = I_D$ is a Vilenkin interval, then
\[ \label{eqn:quasi_local_Dd_2}
\int_{\del[1]{I_1^{(r_1)}}^c \x \ldots \x \del[1]{I_{D-1}^{(r_{D-1})}}^c \x I_D^c} \abs{T a}^{p_0} (x) \d x \leq C 2^{-\eta_1 r_1} \cdot \ldots \cdot 2^{-\eta_{D-1} r_{D-1}}
\]
should hold as well.
Then $T: \c{H}^{p} \to L^{p}$ is bounded for $p_0 \leq p \leq 2$.
Furthermore, since $\c{H}^p$ and $\c{L}^p$ coincide for $1 < p \leq 2$, $T: \c{L}^p \to L^p$ is bounded in this case.
\end{theorem}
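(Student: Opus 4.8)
The plan is to obtain the statement by interpolation from two endpoint estimates. The hypothesis already furnishes boundedness of $T$ from $\c{L}^2$ to $L^2$, so the only thing left to prove is boundedness from $\c{H}^{p_0}$ to $L^{p_0}$; combined with the $\c{L}^2 \to L^2$ bound, interpolation of the Hardy scale recalled in Section~\ref{sec:prelim_spaces} then yields $T : \c{H}^p \to L^p$ throughout $p_0 \leq p \leq 2$, and in particular, via $\c{H}^p = \c{L}^p$, boundedness $\c{L}^p \to L^p$ for $1 < p \leq 2$. Since $p_0 \leq 1$, the quantity $f \mapsto \norm{Tf}_{L^{p_0}}^{p_0}$ is subadditive, so by the atomic decomposition together with the reasoning of the Proposition in Section~\ref{sec:prelim_boundedness} it suffices to establish a uniform bound $\int_{[0,1)^D} \abs{Ta}^{p_0} \leq C$ over all $D$-parameter $p_0$-atoms $a$. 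I would fix such an atom, supported on an open set $F$, and write $a = \sum_{R \in \c{M}(F)} a_R$ for its decomposition into elementary particles over the maximal Vilenkin rectangles $R \subseteq F$.

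The decisive device is a near/far splitting governed by an enlargement $\tilde F \supseteq F$ of the support. I would take $\tilde F$ to be the set where the strong (multi-parameter) maximal function of $\1_F$ exceeds a fixed small threshold, so that $\abs{\tilde F} \leq C \abs{F}$ while $\tilde F$ contains, for each $R$, all of the iterated one-coordinate dilates of $R$ that still fit inside the enlargement. Splitting $\int_{[0,1)^D} \abs{Ta}^{p_0} = \int_{\tilde F} \abs{Ta}^{p_0} + \int_{\tilde F^c} \abs{Ta}^{p_0}$, the near part is routine: Hölder's inequality with exponents $2/p_0$ and $2/(2-p_0)$, the $\c{L}^2 \to L^2$ bound, the normalization $\norm{a}_{L^2} \leq \abs{F}^{1/2-1/p_0}$, and $\abs{\tilde F} \lesssim \abs{F}$ together give $\int_{\tilde F} \abs{Ta}^{p_0} \leq \abs{\tilde F}^{1-p_0/2} \norm{Ta}_{L^2}^{p_0} \lesssim \abs{F}^{1-p_0/2}\abs{F}^{(1/2-1/p_0)p_0} = C$.

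For the far part I would use subadditivity once more, $\abs{Ta}^{p_0} \leq \sum_R \abs{Ta_R}^{p_0}$, and treat each elementary particle $a_R$ — which, after dividing by its $L^2$ norm, is a simple $p_0$-atom on the rectangle $R = I_1 \x \ldots \x I_D$ with cancellation in every coordinate — by decomposing $\tilde F^c$ according to which coordinates of $x$ have escaped the maximal dilate of the corresponding $I_d$ that still lies in $\tilde F$. On the piece where a proper subset of $j < D$ coordinates has escaped, by levels $r_1, \ldots, r_j$, while the rest stay inside $R$, hypothesis \eqref{eqn:quasi_local_Dd_1} applies with the surviving coordinates forming the measurable factor $A$; on the corner piece where all coordinates are exterior, \eqref{eqn:quasi_local_Dd_2} applies. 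Rescaling to the true size of $a_R$ converts the right-hand sides into $C\,\norm{a_R}_{L^2}^{p_0}\abs{R}^{1-p_0/2}$ times the geometric factors, and summing over the shell indices $r_d \geq u_d(R)$, where $2^{u_d(R)}$ is the dilation factor of $R$ in the $d$-th coordinate before it leaves $\tilde F$, leaves a bound $\sum_R \abs{R}^{1-p_0/2}\norm{a_R}_{L^2}^{p_0}\, w(R)$, with $w(R)$ a product of penalties $2^{-\eta'_d u_d(R)}$, $\eta'_d > 0$, over the escaped coordinates. Hölder's inequality in $R$ with exponents $2/p_0$ and $2/(2-p_0)$, fed by the elementary-particle normalization $\sum_R \norm{a_R}_{L^2}^2 \leq \abs{F}^{1-2/p_0}$ (property 3(c) applied to the singleton partition), then reduces the far part to a geometric covering sum of the form $\sum_{R \in \c{M}(F)} \abs{R}\, w(R)^{2/(2-p_0)} \lesssim \abs{F}$.

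This final inequality is the heart of the matter, and I expect it to be the main obstacle. The factor $2^{u_d(R)}$ measures how far the maximal rectangle $R$ can be dilated in the $d$-th direction while staying inside $\tilde F$, and the bound asserts that the resulting $\abs{R}$-weighted sum with geometric penalties in the escaped directions is controlled by $\abs{F}$. For $D = 2$ this is Journé's covering lemma, and it underlies Theorem~\ref{thm:quasi_local_2d}. For $D \geq 3$, however, the naive two-parameter statement is false, and — crucially — our hypotheses only ever supply decay in at most $D-1$ coordinates at once, which is precisely why the condition is split into the two forms \eqref{eqn:quasi_local_Dd_1} and \eqref{eqn:quasi_local_Dd_2}; one must therefore invoke Pipher's inductive covering lemma \cite{pipher1986}, whose recursive structure is tailored to decay-in-all-but-one-coordinate estimates. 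The principal difficulty of the proof is thus combinatorial-geometric rather than analytic: it lies in organizing the shell decomposition of $\tilde F^c$ so that each piece feeds \eqref{eqn:quasi_local_Dd_1} or \eqref{eqn:quasi_local_Dd_2} and the outputs assemble into a sum to which Pipher's lemma applies, together with the verification $\abs{\tilde F} \lesssim \abs{F}$ for the strong-maximal enlargement. These steps are carried out in full in \cite{weisz2013} for the Walsh system, the bounded-Vilenkin case being identical.
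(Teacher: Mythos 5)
Your outline is a correct reconstruction of the Fefferman--Weisz argument (atomic decomposition, near/far splitting via a strong-maximal enlargement of the support, H\"older plus the $\c{L}^2\to L^2$ bound on the near part, the quasi-locality hypotheses plus the Journ\'e/Pipher covering lemma on the far part), and you correctly identify the covering lemma as the genuine difficulty for $D\geq 3$. This is essentially the same route as the paper, which does not reprove the theorem at all but simply cites Theorem 1.45 of \cite{weisz2013} (and \cite{weisz2004}) --- i.e.\ the very argument you sketch --- and observes that the $D=2$ case reduces to Theorem~\ref{thm:quasi_local_2d}.
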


\begin{proof}
Theorem 1.45 of \cite{weisz2013} contains the proof for the Walsh system and $D=3$, the proof for bounded Vilenkin systems with $D \geq 3$ is the same.
The formal statement for bounded Vilenkin systems with $D \geq 3$ can be found in \cite[Theorem 14]{weisz2004}, though without an explicit proof.

Clearly, if $D = 2$, inequalities \eqref{eqn:quasi_local_Dd_1} and \eqref{eqn:quasi_local_Dd_2} instantiated at every permutation of coordinates give exactly inequalities~\eqref{eqn:quasi_local_2d_reformulated}.
Thus the case $D = 2$ follows from Theorem~\ref{thm:quasi_local_2d} and it is fair to say that this proposition generalizes the two-parameter statement.
\end{proof}

The simple atoms featured in Theorem~\ref{thm:quasi_local_bounded} are defined as follows.
\begin{definition}
A function $a \in L^2 [0,1)^D$, $D \geq 2$ (regarded as a martingale) is called a \emph{simple $p$-atom} if there exist Vilenkin intervals $I_i \subseteq [0,1)$, $i = 1, \ldots, j$ for some $1 \leq j \leq D-1$, such that
\begin{itemize}
\item $\supp a \subseteq I_1 \x .. \x I_j \x A$ for some measurable set $A \subseteq [0,1)^{D-j}$,
\item $\norm{a}_{L^2} \leq \del{\abs{I_1} \cdot \ldots \cdot \abs{I_j} \abs{A}}^{1/2-1/p}$,
\item $\int_{I_i} a(x_1, .., x_i, .., x_D) \d x_i = \int_A a(x_1, .., x_j, x_{j+1}, .., x_D) \d x_{j+1} \ldots \d x_{D} = 0$ i.e. $\E_0^i a = 0$ and $\E_0^{j+1} \!\!\!\!\!\ldots \E_0^{D} a = 0$ for all $i = 1, \ldots, j$.
\end{itemize}
A function for which the above holds after a permutation of coordinates is called a simple $p$-atom as well.
\end{definition}

The operators satisfying conditions of Theorem~\ref{thm:quasi_local_bounded} are called \emph{$H^{p_0}$-quasi-local}.

%%%%%%%%%%%%%%%%%%%%%%%%%%%%%%%%%%%%%%%%%%%%%%%%%%
\subsection{The $l^2$-valued case}
\label{sec:prelim_l2_val}
%%%%%%%%%%%%%%%%%%%%%%%%%%%%%%%%%%%%%%%%%%%%%%%%%%

When functions and martingales are $l^2$-valued, we simply change $\abs{\cdot}$ into $\norm{\cdot}_{l^2}$ in all definitions and statements.
As is often the case, the proofs do not change.
This way we may define the spaces $L^p(l^2)$, $\c{L}^p(l^2)$ and spaces $H^p(l^2)$, $\c{H}^p(l^2)$.
For instance, $\c{H}^p(l^2_{\Z})$ consists of $l^2_{\Z}$-valued martingales $f = \cbr{f_{n, l}}_{n \in \Z_+^D, l \in \Z}$ with
\[
\norm{f}_{\c{H}^p(l^2_{\Z})}
=
\norm{
S(f)
}_{L^p}
=
\norm{
\del[2]{{\sum}_{n \in \Z_+^D} \norm{\Delta_n f}_{l^2_{\Z}}^2}^{1/2}
}_{L^p}
<
\infty.
\]
Equation~\eqref{eqn:quasi_local_Dd_1} for operators $T: \c{L}^2(l^2) \to L^2$ does not change (but the simple atoms become $l^2$-valued) and for operators $T: \c{L}^2 \to L^2(l^2)$ it turns into
\[
\int_{\del[1]{I_1^{(r_1)}}^c \x \ldots \x \del[1]{I_j^{(r_j)}}^c \x A } \norm{T a}_{l^2}^{p_0} (x) \d x \leq C_{p_0} 2^{-\eta_1 r_1} \cdot \ldots \cdot 2^{-\eta_j r_j}.
\]
Equation~\eqref{eqn:quasi_local_Dd_2} is transformed analogously.
In this manner, every notion and result of Sections \ref{sec:prelim_multiparam_vilenkin}--\ref{sec:prelim_boundedness} can be transferred to the $l^2$-valued case, which we will use in the following.

%%%%%%%%%%%%%%%%%%%%%%%%%%%%%%%%%%%%%%%%%%%%%%%%%%
%%%%%%%%%%%%%%%%%%%%%%%%%%%%%%%%%%%%%%%%%%%%%%%%%%
%%%%%%%%%%%%%%%%%%%%%%%%%%%%%%%%%%%%%%%%%%%%%%%%%%
%%%%%%%%%%%%%%%%%%%%%%%%%%%%%%%%%%%%%%%%%%%%%%%%%%
%%%%%%%%%%%%%%%%%%%%%%%%%%%%%%%%%%%%%%%%%%%%%%%%%%
\section{Multi-parameter Gundy theorem for bounded Vilenkin martingales}
\label{sec:gundy}
%%%%%%%%%%%%%%%%%%%%%%%%%%%%%%%%%%%%%%%%%%%%%%%%%%
%%%%%%%%%%%%%%%%%%%%%%%%%%%%%%%%%%%%%%%%%%%%%%%%%%
%%%%%%%%%%%%%%%%%%%%%%%%%%%%%%%%%%%%%%%%%%%%%%%%%%
%%%%%%%%%%%%%%%%%%%%%%%%%%%%%%%%%%%%%%%%%%%%%%%%%%
%%%%%%%%%%%%%%%%%%%%%%%%%%%%%%%%%%%%%%%%%%%%%%%%%%

Theorem~\ref{thm:quasi_local_bounded} from the previous section allows us to prove, rather easily, a multi-parameter version of Gundy's theorem on boundedness of operators mapping martingales into measurable functions \cite{gundy1968, gundy1980}, or rather its more recent reformulation by Kislyakov \cite{kislyakov1987}.
This theorem gives a very simple to use sufficient conditions for boundedness of operators, and hence it is of interest to us and may be of independent interest as an additional result of this work.

\begin{theorem} \label{thm:gundy_Dd}
Fix $D \geq 2$ and consider a linear operator $V$ taking $D$-parameter Vilenkin martingales into measurable functions. Assume the following holds.
\1 $V: \c{L}^2 \to L^2$ is bounded.
\2 For any $D$-parameter martingale $f$ such that $f_{0} = 0$ and
\[
\label{eqn:gundy_f_cond}
\Delta_n f = \1_{e_n} \Delta_n f, \t{where} e_n \in \c{F}_{n-\v{1}} \t{with} \del{n-\v{1}}_d = \max(n_d - 1, 0)
,
\]
we have $\cbr{\abs{V f} > 0} \subseteq \bigcup_{n \in \Z_+^D \setminus \{0\}} e_n$.
\0
Then $V: \c{H}^{p} \to L^{p}$ is bounded for any $p \leq 1$ and $V: \c{L}^r \to L^r$ is bounded for~$1 < r \leq 2$.
\end{theorem}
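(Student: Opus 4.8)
The plan is to deduce the statement from the quasi-local criterion of Theorem~\ref{thm:quasi_local_bounded}: I will show that $V$ is $H^{p_0}$-quasi-local for \emph{every} $p_0 \in (0,1]$, with the left-hand sides of~\eqref{eqn:quasi_local_Dd_1} and~\eqref{eqn:quasi_local_Dd_2} in fact equal to zero. The first hypothesis of Theorem~\ref{thm:quasi_local_bounded}, boundedness of $V: \c{L}^2 \to L^2$, is exactly assumption~1). The whole weight of the argument therefore falls on recognizing that assumption~2) is a \emph{support localization} property and that simple $p_0$-atoms, regarded as martingales, fall within its scope.

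So fix $p_0 \in (0,1]$ and let $a$ be a simple $p_0$-atom supported (up to a permutation of coordinates) on $I_1 \x \ldots \x I_j \x A$, where the $I_i$ are Vilenkin intervals and $A \subseteq [0,1)^{D-j}$ is measurable. Denote by $m_d$ the level of $I_d$, so $\abs{I_d} = 1/P^{(d)}_{m_d}$. I first check that the martingale $\cbr{\E_n a}$ meets the hypotheses of assumption~2). The cancellation $\int_{I_1} a \, \d x_1 = 0$, integrated in the remaining variables, gives $\int_{[0,1)^D} a = 0$, i.e. $a_0 = 0$. Next, in each direction $d \le j$ the one-parameter cancellation forces $\E_{k}^d a = 0$ whenever $k \le m_d$ (the average of $a$ over any Vilenkin interval containing $I_d$ vanishes), so $\Delta_{n_d}^d a = 0$ unless $n_d \ge m_d + 1$, and in that case both $\E_{n_d}^d a$ and $\E_{n_d - 1}^d a$ are supported in $I_d$. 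Consequently $\supp \Delta_n a \subseteq I_1 \x \ldots \x I_j \x [0,1)^{D-j}$, and for precisely these indices $n$ this rectangle lies in $\c{F}_{n - \v{1}}$, because $n_d - 1 \ge m_d$ resolves each $I_d$ while the remaining $D-j$ coordinates are enlarged to all of $[0,1)$. Taking $e_n$ to be this rectangle when $\Delta_n a \ne 0$ (and $e_n = \emptyset$ otherwise) realizes the condition $\Delta_n a = \1_{e_n} \Delta_n a$ with $e_n \in \c{F}_{n-\v{1}}$.

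Assumption~2) then yields $\cbr{\abs{V a} > 0} \subseteq \bigcup_n e_n = I_1 \x \ldots \x I_j \x [0,1)^{D-j}$. Since the domain of integration in~\eqref{eqn:quasi_local_Dd_1} has its first coordinate in $\del{I_1^{(r_1)}}^c \subseteq I_1^c$, disjoint from $I_1$, the function $V a$ vanishes identically there; the same holds for~\eqref{eqn:quasi_local_Dd_2}, where $A = I_D$ is a Vilenkin interval and one may take $e_n$ to be the full Vilenkin rectangle $I_1 \x \ldots \x I_D$. Hence both integrals equal $0$, and the quasi-local estimates hold with any choice of $\eta_1, \ldots, \eta_D > 0$. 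Theorem~\ref{thm:quasi_local_bounded} now gives $V: \c{H}^p \to L^p$ bounded for $p_0 \le p \le 2$; letting $p_0 \to 0^+$ covers all $p \le 1$, and the statement $V: \c{L}^r \to L^r$ for $1 < r \le 2$ is part of the same conclusion.

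The argument is short precisely because the analytic difficulty is already encapsulated in Theorem~\ref{thm:quasi_local_bounded} (Weisz's atomic machinery together with the Journé/Pipher covering lemmas). The only genuine care needed is the bookkeeping of supports and predictability for simple atoms: I must enlarge $e_n$ to all of $[0,1)$ in the $D-j$ ``free'' coordinates occupied by the arbitrary set $A$ — where~\eqref{eqn:quasi_local_Dd_1} integrates over $A$ rather than its complement, so that no localization is required there — and I must treat the degenerate case $j = D-1$, $A = I_D$ of~\eqref{eqn:quasi_local_Dd_2} separately, as above. Finally, the $l^2$-valued versions discussed in Section~\ref{sec:prelim_l2_val} need literally the same reasoning with $\abs{\cdot}$ replaced by $\norm{\cdot}_{l^2}$, since the support localization is insensitive to the scalar- or vector-valued nature of the atoms.
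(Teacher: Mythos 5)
Your proposal is correct and follows essentially the same route as the paper: both reduce the theorem to the quasi-locality criterion of Theorem~\ref{thm:quasi_local_bounded} by verifying that every simple $p_0$-atom satisfies hypothesis~2) with $e_n = I_1 \x \ldots \x I_j \x [0,1)^{D-j}$ (refined to $I_1 \x \ldots \x I_D$ in the case $j = D-1$, $A = I_D$), so that $\supp Va$ misses the integration domains and the quasi-local integrals vanish identically. Your verification of the support claim via the one-parameter operators $\E_k^d$ and the cancellations is just a rephrasing of the paper's computation of $\E_n a$ on atoms of $\c{F}_n$, so there is nothing substantively different to compare.
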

\begin{proof}
We will show that $V$ is $H^p$-quasi-local for any $p \leq 1$ by checking conditions~\eqref{eqn:quasi_local_Dd_1} and~\eqref{eqn:quasi_local_Dd_2} for all simple atoms, the claim will then follow from Theorem~\ref{thm:quasi_local_bounded}.
Instead of carrying an arbitrary permutation of coordinates (as by Theorem~\ref{thm:quasi_local_bounded}) through the notation, we assume the most convenient one and rely on the symmetry of the statement we prove.

Consider a simple atom $a$ supported on a set $I_1 \x .. \x I_j \x A$, where $I_d$ are Vilenkin intervals and $A$ is a measurable set.
Denote $S = I_1 \x .. \x I_j \x [0,1)^{D-j}$ and find such index $N \in \Z_+^D$ that $S$ is an atom of $\c{F}_N$.
Such $N$ does exist because $S$ is a product of Vilenkin intervals, and it is exactly the minimal $N$ such that $S \in \c{F}_N$.
Note also that $N_{j+1} = .. = N_D = 0$.
We claim the following.
\<
\label{eqn:gundy_to_prove_1}
\Delta_{n} a &= \1_{I_1 \x .. \x I_j \x [0,1)^{D-j}} \Delta_{n} a,
&\t{if} n_d > N_d \t{for all} 1 \leq d \leq j,
\\
\label{eqn:gundy_to_prove_2}
\Delta_{n} a &= \1_{\emptyset} \Delta_{n} a,
&\t{otherwise.}
\>
Let us postpone the verification of these relations till the end of the proof and assume, for a time being, that they hold true.
From Equations \eqref{eqn:gundy_to_prove_1}, \eqref{eqn:gundy_to_prove_2} we have that $\Delta_n f = \1_{e_n} \Delta_n f$ for $e_n = I_1 \x .. \x I_j \x [0,1)^{D-j}$ or $e_n = \emptyset$ depending on the value of $n$.
The former case corresponds to such $n$ that $n_d > N_d ~\forall 1 \leq d \leq j$.
Here, since $e_n \in \c{F}_N$ while $N \leq n - \v{1}$, we have that $e_n \in \c{F}_{n-\v{1}}$.
In the latter case, the relation $e_n \in \c{F}_{n-\v{1}}$ is trivial since $\emptyset$ is always an element of any \mbox{$\sigma$-algebra}.
It~follows then that~\eqref{eqn:gundy_f_cond} holds for $f = a$ (we have $a_{0} = \Delta_{0} a = 0$ as a consequence of~\eqref{eqn:gundy_to_prove_2}).
Hence $\cbr{\abs{V a} > 0} \subseteq I_1 \x .. \x I_j \x [0,1)^{D-j}$ by assumption.
Consequently, we can write
\[
\int_{\del{I_1^{(r_1)}}^c \x .. \x \del{I_j^{(r_j)}}^c} \int_{A} \abs{V a}^p
\leq
\int_{\cbr{\del{V a} > 0}^c} \abs{V a}^p
=
0
\leq
C 2^{- \eta_1 r_1 - .. - \eta_j r_j}
,
\]
for whatever $\eta_1, .., \eta_D > 0$.
This is true because the set over which the integral is taken do not intersect the support of the function being integrated.

If $j = D - 1$ and $A = I_D \subseteq [0,1)$ is a Vilenkin interval we need to check also the condition~\eqref{eqn:quasi_local_Dd_2} of Theorem~\ref{thm:quasi_local_bounded}.
In this case, relation~\eqref{eqn:gundy_to_prove_1} holds true with the indicator $\1_{I_1 \x .. \x I_{D-1} \x I_D}$ instead of the indicator $\1_{I_1 \x .. \x I_{D-1} \x [0,1)}$, which will be noted separately in the proof of~\eqref{eqn:gundy_to_prove_1} and~\eqref{eqn:gundy_to_prove_2} below.
By assumption we then have $\cbr{\del{V a} > 0} \subseteq I_1 \x .. \x I_{D-1} \x I_{D}$, hence for any $r \in \N^D$
\[
\int_{\del{I_1^{(r_1)}}^c \x .. \x I_{D}^c} \abs{V a}^p
\leq
\int_{\cbr{\del{V a} > 0}^c} \abs{V a}^p
=
0
\leq
C 2^{- \eta_1 r_1 - .. - \eta_{D-1} r_{D-1}}
,
\]
for whatever $\eta_1, .., \eta_D > 0$, analogous to the above.

In order to finish the proof we need to verify the relations \eqref{eqn:gundy_to_prove_1} and \eqref{eqn:gundy_to_prove_2}.
To do this, let us study the support of $\E_n a$ for different values of $n \in \Z_+^D$.
Since $\E_n a$ is a constant on each atom $B$ of $\c{F}_n$ and equal to $1/\abs{B} \int_B a(x) \d x$ therein, we need only describe such $B$ that $\restr{\del{\E_n a}}{B} \not = 0$.

First, consider such $n$ that $n_d > N_d ~\forall 1 \leq d \leq j$.
Since $n_d \geq 0 = N_d, ~\forall d > j$, we have that $n \geq N$ holds, and thus either $B \subseteq S$ or $B \cap S = \emptyset$ must be true.
In the latter case $B$ does not intersect the support of $a$, thus we have $\restr{\del{\E_n a}}{B} = 1/\abs{B} \int_B a(x) = 0$, and then $\supp \del{\E_n a} \subseteq S$ follows.
Note that when $a$ is supported on $I_1 \x .. \x I_{D-1} \x I_D$ we may take $S = I_1 \x .. \x I_{D-1} \x I_D$ instead of $S = I_1 \x .. \x I_{D-1} \x [0,1)$ and the same argument will hold, proving that $\supp \del{\E_n a} \subseteq S = I_1 \x .. \x I_{D-1} \x I_D$ for $n > N$.

Now, consider such $n$ that $n_d > N_d ~\forall 1 \leq d \leq j$ does \emph{not} hold.
Then there exists a particular index $1 \leq d \leq D-j$ such that $n_d \leq N_d$.
In this case, for every atom $B = B_1 \x .. \x B_D$ of $\c{F}_n$ either $I_d \subseteq B_d$ or $I_d \cap B_d = \emptyset$ holds.
Then, denoting the set $B$ with its $d$-th dimension removed, by $\widetilde{B} = B_1 \x .. \x B_{d-1} \x B_{d+1} \x .. \x B_{D}$,
\[
\restr{\del{{\E}_n a}}{B}
\!=\!
\frac{1}{\abs{B}} \int_B\!\! a(x)
\!=\!
\frac{1}{\abs{B}} \int_{\widetilde{B}} \del{\int_{B_{d}}\!\!\! a(x) \d x_d} \d x_1 .. \d x_{d-1} \d x_{d+1} .. \d x_{D}
\!=\!
0,
\]
where we first used Fubini's theorem to switch the integration order and then, to show that the inner integral is zero, either $\supp a(.., x_{d-1}, \cdot, x_{d}, ..) \subseteq I_d$ or $\int a(x) \d x_d = 0$ when $I_d \cap B_d = \emptyset$ or $I_d \subseteq B_d$ respectively. Hence $\supp \del{\E_n a} \!\subseteq\! \emptyset$.

Since $\Delta_{n} a = \sum_{\varepsilon_1, ..,\varepsilon_D \in \cbr{0, 1}} (-1)^{\varepsilon_1 + .. + \varepsilon_D} \E_{n_1 - \varepsilon_1, .., n_D - \varepsilon_D} a$ (cf. Equation~\eqref{eqn:difference_definition}), we have $\supp \del{\Delta_{n} a} \subseteq \cup_{\varepsilon_1, ..,\varepsilon_D \in \cbr{0, 1}} \supp \del{\E_{n_1 - \varepsilon_1, .., n_D - \varepsilon_D} a}$.
Thus for $n$ with $n_d > N_d ~\forall 1 \leq d \leq j$ we have $\supp \del{\Delta_{n} a} \subseteq I_1 \x .. \x I_j \x [0,1)^{D-j}$, or even $\supp \del{\Delta_{n} a} \subseteq I_1 \x .. \x I_{D-1} \x I_D$ when $j = D - 1$ and $A = I_D$ is a Vilenkin interval.
When $n_d > N_d ~\forall 1 \leq d \leq j$ does not hold, we have $\supp \del{\Delta_{n} a} \subseteq \emptyset$ since $\supp \del{\E_{n_1 - \varepsilon_1, .., n_D - \varepsilon_D} a}$ is empty.
This verifies the relations \eqref{eqn:gundy_to_prove_1} and \eqref{eqn:gundy_to_prove_2} and so the proof is finished.
\end{proof}

\paragraph{A useful corollary}
Let us define \emph{the modified martingale difference operators}
\[
\Delta_{n, l} f
=
\sum\limits_{k \in \delta_{n,l}}
\innerprod{f}{v_k(\cdot)}
\ 
v_k(x),
\]
where the notation is analogous to~\eqref{eqn:difference_vilenkin_sum}, $l = (l_1, \ldots, l_D)$ with $1 \leq l_d < p^{(d)}_{n_d}$ and
\[ \label{eqn:delta_n_l_definition}
\begin{aligned}
\delta_{n, l}
=
&[l_1 P^{(1)}_{n_1-1}, (l_1 + 1) P^{(1)}_{n_1-1})
\\
\x
&[l_2 P^{(2)}_{n_2-1}, (l_2 + 1) P^{(2)}_{n_2-1})
\\
&\dots
\\
\x
&[l_D P^{(D)}_{n_D-1}, (l_D + 1) P^{(D)}_{n_D-1})
\subseteq \Z_+^D
.
\end{aligned}
\]
For the Walsh system we always have $l_d = 1$ and these operators coincide with~$\Delta_n$, but for the general Vilenkin systems they can be quite useful as they behave favorable under shifts for the general Vileknin systems (and $\delta_{n}$ do not), the fact that will be used in the geometric part of the proof of the main theorem.
Note that each dimension of $\delta_{n, l}$ corresponds to the numbers that have $l_d$ as their $n_d$-th digit (which is the most significant digit) in the mixed radix system corresponding to $\v{p}_d$.
Here we prove that Theorem~\ref{thm:gundy_Dd} holds with operators $\Delta_{n, l}$ instead of operators $\Delta_n$ in the assumption.
\begin{corollary} \label{thm:gundy_cor}
Fix $D \geq 2$ and consider a linear operator $V$ taking $D$-parameter Vilenkin martingales into measurable functions. Assume the following holds.
\1 $V: \c{L}^2 \to L^2$ is bounded.
\2 For any $D$-parameter martingale $f$ such that $f_{0} = 0$ and
\[
\Delta_{n, l} f = \1_{e_n} \Delta_{n, l} f, \t{where} e_n \in \c{F}_{n-\v{1}} \t{with} \del{n-\v{1}}_d = \max(n_d - 1, 0)
,
\]
we have $\cbr{\abs{V f} > 0} \subseteq \bigcup_{n \in \Z_+^D \setminus \{0\}} e_n$.
\0
Then $V: \c{H}^{p} \to L^{p}$ is bounded for any $p \leq 1$ and $V: \c{L}^r \to L^r$ is bounded for~$1 < r \leq 2$.
\end{corollary}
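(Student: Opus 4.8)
The plan is to re-use the proof of Theorem~\ref{thm:gundy_Dd} almost verbatim. That proof invokes its second hypothesis only on simple $p_0$-atoms: for such an atom $a$ it checks, through \eqref{eqn:gundy_to_prove_1} and \eqref{eqn:gundy_to_prove_2}, that $a_0 = 0$ and $\Delta_n a = \1_{e_n}\Delta_n a$ with $e_n = I_1 \x .. \x I_j \x [0,1)^{D-j}$ (or $e_n = \emptyset$), and then deduces the quasi-local estimates \eqref{eqn:quasi_local_Dd_1}, \eqref{eqn:quasi_local_Dd_2} and applies Theorem~\ref{thm:quasi_local_bounded}. Since the finer relation $\Delta_{n,l} a = \1_{e_n}\Delta_{n,l}a$ summed over $l$ returns $\Delta_n a = \1_{e_n}\Delta_n a$, the corollary's hypothesis is formally weaker than the theorem's, so I cannot merely quote it. Instead it suffices to prove that every simple atom $a$ satisfies the sharper relations $\Delta_{n,l} a = \1_{e_n}\Delta_{n,l}a$ with the very same sets $e_n$: once this is known, $a$ fulfils the structural requirement in the second assumption of the corollary, the inclusion $\cbr{\abs{V a} > 0} \subseteq \bigcup_n e_n$ follows, and the remainder of the argument is identical to that of Theorem~\ref{thm:gundy_Dd}.

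The key tool for the sharper relations is a representation of $\Delta_{n,l}$ as a conjugated conditional expectation. In one coordinate, shifting a character index by $l_d P^{(d)}_{n_d-1}$ multiplies the character by the generalized Rademacher function $r^{(d)}_{n_d}{}^{l_d}$ (cf.~\eqref{eqn:rademacher_definition}), whence $\Delta^d_{n_d,l_d} f = r^{(d)}_{n_d}{}^{l_d}\,\E^d_{n_d-1}\del{\overline{r^{(d)}_{n_d}}{}^{l_d} f}$. Taking the $D$-fold product and using that operators on distinct coordinates commute gives
\[
\Delta_{n,l} f = R_{n,l}\cdot\E_{n-\v{1}}\del{\overline{R_{n,l}}\, f}, \qquad R_{n,l}(x) = \prod_{d=1}^{D} r^{(d)}_{n_d}(x_d)^{l_d}.
\]
Here $R_{n,l}$ is a product of Rademacher functions, so $\abs{R_{n,l}} \equiv 1$ and $\supp\del{\Delta_{n,l}f} = \supp\del{\E_{n-\v{1}}\del{\overline{R_{n,l}}f}}$. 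I would then re-run the support analysis of the theorem with $\E_{n-\v{1}}\del{\overline{R_{n,l}}a}$ in place of $\E_n a$, over the atoms $B = B_1 \x .. \x B_D$ of $\c{F}_{n-\v{1}}$ (it is enough to treat $n$ with all $n_d \geq 1$, since only these carry a modified difference; for the remaining $n \neq 0$ we set $e_n = \emptyset$). With $N$ the minimal index such that $S = I_1 \x .. \x I_j \x [0,1)^{D-j}$ lies in $\c{F}_N$: when $n_d > N_d$ for all $d \leq j$, the atoms $B$ are contained in or disjoint from $S$, and $\overline{R_{n,l}}a$ vanishes off $\supp a \subseteq S$, so $\supp\del{\Delta_{n,l}a} \subseteq S$, exactly as before.

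The one genuinely new point—and the step I expect to be the main obstacle—is the vanishing case: if $n_d \leq N_d$ for some $d \leq j$, I must show $\int_{B}\overline{R_{n,l}}a = 0$ for every atom $B$, i.e.\ that the twist by $\overline{R_{n,l}}$ does not destroy the mean-zero cancellation used in the theorem. Factoring the integral and examining the inner integral in $x_d$, either $B_d \cap I_d = \emptyset$ (and the integrand vanishes) or $I_d \subseteq B_d$. In the latter case the crux is that $1 \leq n_d \leq N_d$ forces the $n_d$-th digit of $x_d$ to be constant on the Vilenkin interval $I_d$ (whose first $N_d$ digits are fixed); hence $r^{(d)}_{n_d}$ is constant on $I_d$, pulls out of the integral, and what remains is $\int_{I_d} a\,dx_d = 0$ by the atom's mean-zero property in direction $d$. (When $N_d = 0$ this case cannot occur, since then $n_d \geq 1 > N_d$.) This yields $\Delta_{n,l}a = 0$ and completes the two sharper relations.

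With $\Delta_{n,l}a = \1_{e_n}\Delta_{n,l}a$ established (and $a_0 = 0$), the second assumption of the corollary gives $\cbr{\abs{V a} > 0} \subseteq I_1 \x .. \x I_j \x [0,1)^{D-j}$ (or $\subseteq I_1 \x .. \x I_D$ in the borderline case $j = D-1$, $A = I_D$). The integrals in \eqref{eqn:quasi_local_Dd_1} and \eqref{eqn:quasi_local_Dd_2} are then taken over sets disjoint from the support of $V a$ and hence vanish, so $V$ is $H^{p_0}$-quasi-local for every $p_0 \leq 1$; Theorem~\ref{thm:quasi_local_bounded} together with the boundedness $V \colon \c{L}^2 \to L^2$ gives the stated conclusion. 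The whole argument thus reduces to the Rademacher-twist cancellation above, which is precisely what makes the finer operators $\Delta_{n,l}$ behave as well as $\Delta_n$.
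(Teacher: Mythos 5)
Your proof is correct, but it takes a genuinely different route from the paper's. The paper never re-opens the proof of Theorem~\ref{thm:gundy_Dd}: it proves the converse of the trivial implication you noted, namely that for an \emph{arbitrary} martingale the ordinary localization $\Delta_n f = \1_{e_n}\Delta_n f$ already forces the finer one, $\Delta_{n,l} f = \1_{e_n}\Delta_{n,l} f$ with the same sets $e_n$. This is done by showing that for $l \neq s$ the blocks $\Delta_{n,l}f$ and $\Delta_{n,s}f$ are $L^2$-orthogonal on every atom of $\c{F}_{n-\v{1}}$ (the top-digit Rademacher factors $r_{n_1}^{l_1}$ and $r_{n_1}^{s_1}$ integrate to zero against each other on such an atom while the lower-digit factors are constant there); since $\Delta_n f = \sum_l \Delta_{n,l}f$ vanishes on any atom disjoint from $e_n$, orthogonality forces each summand to vanish there too. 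Hence the two structural conditions on martingales are equivalent, the corollary's hypothesis implies the theorem's, and Theorem~\ref{thm:gundy_Dd} applies as a black box. You instead verify the finer relations only for simple atoms, via the conjugation identity $\Delta_{n,l} = R_{n,l}\,\E_{n-\v{1}}\del{\overline{R_{n,l}}\,\cdot\,}$ and the observation that $r^{(d)}_{n_d}$ is constant on $I_d$ when $n_d \leq N_d$, and then re-run the quasi-locality argument; both the identity and the twisted cancellation check out. What each buys: the paper's version is shorter and produces a reusable equivalence of the two localization conditions, while yours isolates an explicit and illuminating formula for $\Delta_{n,l}$ as a twisted conditional expectation that makes the cancellation mechanism transparent. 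One small imprecision on your side: under the paper's convention an index $n$ with some $n_d = 0$ still carries a (degenerate) modified difference equal to the ordinary one, so rather than declaring such $n$ void you should note that $\Delta_{n,l}a = 0$ there by the same mean-zero cancellation, untwisted in that coordinate; this is immediate and does not affect the argument.
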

\begin{proof}
Consider a martingale $f$ for which $f_{0} = 0$ and
\[
\Delta_{n} f = \1_{e_n} \Delta_{n} f, \t{where} e_n \in \c{F}_{n-\v{1}}
.
\]
We claim that $\Delta_{n, l} f = \1_{e_n} \Delta_{n, l} f$.
If this is true, then direct application of Theorem~\ref{thm:gundy_Dd} proves the corollary.
To prove this, we show that for $l \not= s$, the functions $\Delta_{n, l} f$ and $\Delta_{n, s} f$ are $L^2$-orthogonal when restricted onto any atom of~$\c{F}_{n-\v{1}}$.
Since for any atom $B \in \c{F}_{n-\v{1}}$ such that $B \cap e_n = \emptyset$ we have ${\restr{\Delta_n f}{B} = 0}$, then, representing $\Delta_n f = \sum_l \Delta_{n, l} f$ and using orthogonality, we get that $\restr{\Delta_{n, l} f}{B} = 0$, which implies $\Delta_{n, l} f = \1_{e_n} \Delta_{n, l} f$.

\begin{samepage}
Now, we prove the orthogonality.
Fix an atom ${B = B_1 \x .. \x B_D \in \c{F}_{n-\v{1}}}$ and a pair of indices $l \not= s$.
We will show that $\int_B \del{\Delta_{n, l} f}(x) \del{\Delta_{n, s} f}(x) \d x = 0$.
Find an index $d$ be such that $l_d \not= s_d$.
Then $n_d > 0$, since for $n_d = 0$ there does not exist a pair of distinct indices $1 \leq l_d, s_d \leq p^{(d)}_0 = 1$ (there is only one $\Delta_{n_d, l_d} = \Delta_{n_d}$ in this case). Assume $d = 1$, without loss of generality.
We have
\<
\del{\Delta_{n, l} f}(x_1, .., x_D) &= {\sum}_{k = l_1 P^{(1)}_{n_1-1}}^{(l_1 + 1) P^{(1)}_{n_1-1}-1} a_k(x_2, .., x_D) v_k(x_1),
\\
\del{\Delta_{n, s} f}(x_1, .., x_D) &= {\sum}_{k = s_1 P^{(1)}_{n_1-1}}^{(s_1 + 1) P^{(1)}_{n_1-1}-1} b_k(x_2, .., x_D) v_k(x_1),
\>
where $v_k$ are one-parameter Vilenkin functions corresponding to the sequence~$\v{p}_1$.
\end{samepage}
We see that in order to prove orthogonality of $\Delta_{n, l} f$ and $\Delta_{n, s} f$ on atom $B$ it is enough to show the orthogonality of $\restr{v_k}{B_1}$ to $\restr{v_m}{B_1}$ for index $k$ in the set~$[l_1 P^{(1)}_{n_1-1}, (l_1 + 1) P^{(1)}_{n_1-1})$ and index $m$ in the set $[s_1 P^{(1)}_{n_1-1}, (s_1 + 1) P^{(1)}_{n_1-1})$.
To do this, write
\<
v_k(x_1) &= r_1^{\alpha_1} (x_1) r_2^{\alpha_2} (x_1) \cdot \ldots \cdot r_{n_1-1}^{\alpha_{n_1-1}} (x_1) r_{n_1}^{l_1} (x_1)
\\
v_m(x_1) &= r_1^{\beta_1} (x_1) r_2^{\beta2} (x_1) \cdot \ldots \cdot r_{n_1-1}^{\beta_{n_1-1}} (x_1) r_{n_1}^{s_1} (x_1),
\>
where $r_i$ are one-parameter generalized Rademacher functions, cf.~\eqref{eqn:rademacher_definition}.
The functions $r_i, 1 \leq i < n_1$ are constant on atom $B$, while $r_{n_1}^{l_1}$ and $r_{n_1}^{s_1}$ are orthogonal because
\[
\int_{B_1} r_{n_1}^{l_1} (x_1) \overline{r_{n_1}^{s_1} (x_1)} \d x_1
=
\int_{B_1} r_{n_1}^{l_1 - s_1} (x_1) \d x_1
=
\frac{1}{P^{(1)}_{n_1}} \sum_{r = 0}^{p^{(d)}_{n_1} - 1} e^{\frac{2 \pi i (l_1 - s_1) r}{p^{(d)}_{n_1}}}
=
0
.
\]
This proves the claim.
\end{proof}

\begin{remark}
Of course, both Theorem~\ref{thm:gundy_Dd} and Corollary~\ref{thm:gundy_cor} hold for operators taking $l^2$-valued martingales into measurable functions or taking scalar-valued martingales into $l^2$-valued functions.
The respective proofs do not change.
\end{remark}

%%%%%%%%%%%%%%%%%%%%%%%%%%%%%%%%%%%%%%%%%%%%%%%%%%
%%%%%%%%%%%%%%%%%%%%%%%%%%%%%%%%%%%%%%%%%%%%%%%%%%
%%%%%%%%%%%%%%%%%%%%%%%%%%%%%%%%%%%%%%%%%%%%%%%%%%
%%%%%%%%%%%%%%%%%%%%%%%%%%%%%%%%%%%%%%%%%%%%%%%%%%
%%%%%%%%%%%%%%%%%%%%%%%%%%%%%%%%%%%%%%%%%%%%%%%%%%
\section{Boundedness of the auxiliary operators}
\label{sec:operators}
%%%%%%%%%%%%%%%%%%%%%%%%%%%%%%%%%%%%%%%%%%%%%%%%%%
%%%%%%%%%%%%%%%%%%%%%%%%%%%%%%%%%%%%%%%%%%%%%%%%%%
%%%%%%%%%%%%%%%%%%%%%%%%%%%%%%%%%%%%%%%%%%%%%%%%%%
%%%%%%%%%%%%%%%%%%%%%%%%%%%%%%%%%%%%%%%%%%%%%%%%%%
%%%%%%%%%%%%%%%%%%%%%%%%%%%%%%%%%%%%%%%%%%%%%%%%%%

Here we introduce a version of the Littlewood--Paley square function associated with operators $\Delta_{n, l}$ instead of operators $\Delta_n$ and also the $D$-parameter Vilenkin analog of auxiliary operator $G$ from \cite{osipov2017}.
Boundedness of this square function and of the auxiliary operator is the subject of the two principal lemmas that we prove in this section.

\paragraph{A version of the square function} We start by defining a version of the Littlewood--Paley square function associated with operators $\Delta_{n, l}$ instead of $\Delta_n$.
\begin{definition}
An operator $S_m$ that maps (possibly $l^2$-valued) Vilenkin martingales or measurable functions into measurable functions by rule
\[
&S_m f = \del{\sum_{n \in \Z_+^D} \sum_{1 \leq l < p_n} \abs{\Delta_{n, l} f}^2}^{1/2},
&
&\t{with} p_n \stackrel{\text{def}}{=} (p^{(1)}_{n_1}, \ldots, p^{(D)}_{n_D}),
\]
will be referred to as \emph{the modified square function}.
\end{definition}

\begin{lemma} \label{thm:modified_s_bounded}
The modified square function $S_m: \c{L}^p \to L^p$ is bounded for ${1 < p \leq 2}$.
Same is true if $S_m$ is regarded as the operator $S_m: L^p \to L^p$.
Additionally, $S_m: \c{H}^p \to L^p$ is bounded for $p \leq 1$.
\end{lemma}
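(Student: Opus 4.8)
The plan is to recognize $S_m$ as the pointwise $l^2$-norm of a single $l^2$-valued operator and to apply the version of Corollary~\ref{thm:gundy_cor} for operators taking scalar-valued martingales into $l^2$-valued functions (whose validity is asserted in the Remark following the corollary). Concretely, I would set $V f = \cbr{\Delta_{n,l} f}_{n, l}$, viewed as an $l^2$-valued function indexed by the countable set of admissible pairs $(n, l)$, so that $S_m f = \norm{V f}_{l^2}$ pointwise and $\cbr{\abs{V f} > 0} = \cbr{S_m f > 0}$. All three assertions of the lemma will then follow from the conclusion of the corollary, once its two hypotheses are checked for $V$.

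For the first hypothesis I would verify $V : \c{L}^2 \to L^2(l^2)$, i.e.\ $\norm{S_m f}_{L^2} \lesssim \norm{f}_{\c{L}^2}$. By Tonelli's theorem $\norm{S_m f}_{L^2}^2 = \sum_{n, l} \norm{\Delta_{n,l} f}_{L^2}^2$, and since each $\Delta_{n,l} f$ is the orthogonal projection of $f$ onto the Vilenkin functions indexed by $\delta_{n,l}$ (cf.~\eqref{eqn:delta_n_l_definition}), while the sets $\delta_{n,l}$ are pairwise disjoint, this sum equals $\sum_{k} \abs{\innerprod{f}{v_k}}^2$ taken over $k \in \bigcup_{n,l} \delta_{n,l} \subseteq \Z_+^D$. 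By Parseval's identity this is at most $\norm{f}_{L^2}^2 = \norm{f}_{\c{L}^2}^2$, so the bound holds with constant $1$.

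For the second hypothesis, I would take a martingale $f$ with $f_{0} = 0$ and $\Delta_{n,l} f = \1_{e_n} \Delta_{n,l} f$ for sets $e_n \in \c{F}_{n - \v{1}}$. Then $\supp \del{\Delta_{n,l} f} \subseteq e_n$ for every admissible $(n, l)$, and since $S_m f(x) > 0$ holds exactly when $\Delta_{n,l} f(x) \neq 0$ for some $(n, l)$, I obtain $\cbr{\abs{V f} > 0} \subseteq \bigcup_{n, l} e_n \subseteq \bigcup_{n \in \Z_+^D \setminus \cbr{0}} e_n$; the last inclusion uses that admissibility of $(n, l)$ forces $n_d \geq 1$ for all $d$, in particular $n \neq 0$. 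This is precisely the support condition required by Corollary~\ref{thm:gundy_cor}, so the corollary applies and yields boundedness of $V$, hence of $S_m$, from $\c{H}^p$ to $L^p$ for $p \leq 1$ and from $\c{L}^r$ to $L^r$ for $1 < r \leq 2$.

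Finally, the statement for $S_m$ as an operator on $L^p$ with $1 < p \leq 2$ follows from the martingale statement by the identification of $\c{L}^p$ with $L^p$ for $1 < p < \infty$ described in Section~\ref{sec:prelim_spaces}: a function $g \in L^p$ corresponds to the martingale $\cbr{\E_n g}$ of equal norm and with identical differences $\Delta_{n,l} g$, so $S_m g$ coincides under the two interpretations. I do not expect a genuine obstacle here, since Corollary~\ref{thm:gundy_cor} was tailored to the operators $\Delta_{n,l}$; the only point deserving care is that the ordinary Gundy theorem (Theorem~\ref{thm:gundy_Dd}), phrased in terms of $\Delta_n$, does not apply directly to a square function built from the finer operators $\Delta_{n,l}$, which is exactly why the corollary rather than the theorem must be invoked.
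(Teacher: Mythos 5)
Your proposal is correct and follows essentially the same route as the paper: the paper also lifts $S_m$ to an $l^2$-valued operator $\widetilde{S}_m f = \cbr{\Delta_{n,l}f}_{n,l}$ (padded with zeros over non-admissible indices), invokes the $l^2$-valued version of Corollary~\ref{thm:gundy_cor}, and derives the $L^p\to L^p$ statement from the martingale--function correspondence. The only difference is that the paper states the application of the corollary without spelling out the two hypotheses, which you verify explicitly (correctly, via Parseval with the disjointness of the $\delta_{n,l}$ and the support inclusion $\supp \Delta_{n,l}f \subseteq e_n$).
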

\begin{proof}
Consider the operator that maps martingales into $l^2_{\Z_+^D \x \N^D}$-valued functions such that
\[
\cbr[1]{\widetilde{S}_{m} f}_{n, l} =
\begin{cases}
\Delta_{n, l} f, & \t{for} n \in \Z_+^D \t{and} 1 \leq l < p_n, \\
0, & \t{for} n \in \Z_+^D \t{and} l \in \N^D \t{that do not satisfy} l < p_n.
\end{cases}
\]
Then, boundedness of $S_m: \c{L}^p \to L^p$ (resp. $S_m: \c{H}^p \to L^p$ for $p \leq 1$) follows from the boundedness of the operator $\widetilde{S}_m: \c{L}^p \to L^p(l^2_{\Z_+^D \x \N^D})$ (resp. the boundedness of the operator $\widetilde{S}_m: \c{H}^p \to L^p(l^2_{\Z_+^D \x \N^D})$ for $p \leq 1$), while the latter operator's boundedness follows directly from the $l^2$-valued version of Corollary~\ref{thm:gundy_cor}.
Indeed, $\norm[1]{\cbr[1]{\widetilde{S}_{m} f}_{n, l}}_{L^2(l^2)}^2 = \sum_{n \in \Z_+^D} \sum_{l = 1}^{p_n} \norm[1]{\Delta_{n, l} f}_{L^2}^2 = \norm[1]{f}_{\c{L}^2}^2$, ensuring condition 1 of Corollary~\ref{thm:gundy_cor} is fulfilled, while $\cbr[1]{\norm[1]{\cbr[1]{\widetilde{S}_{m} f}_{n, l}}_{l^2} > 0} = \bigcup_{n \in \Z_+^D \setminus \{0\}} e_n$ whenever $\Delta_{n, l} f = \1_{e_n} \Delta_{n, l} f$, $f_0 = 0$, and $e_n \in \c{F}_{n-\v{1}}$, ensuring condition 2 is fulfilled.
Boundedness of $S_m: L^p \to L^p$ follows from the one-to-one isometric correspondence between martingales and measurable functions for $1 < p \leq 2$.
\end{proof}

\paragraph{An auxiliary operator} Consider a family of multi-indices $\c{A} \subseteq \N \x \Z_+^D$.
We~denote elements of $\c{A}$ by $(k, n)$ where $k \in \N, n \in \Z_+^D$.
Consider also a family of indices $\cbr{a_{k, n}}_{(k, n) \in \c{A}} \subseteq \Z_+^D$ and a family of sets $\cbr{\Lambda_{k, n}}_{(k, n) \in \c{A}}$, such that $\Lambda_{k, n} \subseteq [1, p^{(1)}_{n_1}-1] \x .. \x [1, p^{(D)}_{n_D}-1]$ and $\cbr{a_{k, n} \dot{+} \delta_{n, l}}_{(k, n) \in \c{A}, l \in \Lambda_{k, n}}$ is a collection of pairwise non-intersecting subsets of $\Z_+^D$.

\begin{lemma} \label{thm:G_is_bounded}
Define operator $G$ mapping a function $h = \cbr{h_{k, n}}_{(k, n) \in \N \x \Z_+^D}$ taken from $L^p(l^2_{\N \x \Z_+^{D}})$, $1 < p \leq 2$ into
\[
(G h)(x_1, .., x_D)
=
\sum\limits_{(k, n) \in \c{A}, l \in \Lambda_{k, n}} v_{a_{k, n}}(x_1, .., x_D) \del{\Delta_{n, l} h_{k, n}}(x_1, .., x_D).
\]
Then $\norm{G h}_{L^p} \leq C \norm{h}_{L^p(l^2)}$.
Additionally, for $p \leq 1$ we have that $\norm{G h}_{L^p} \leq C \norm{h}_{\c{H}^p(l^2)}$.
Here $C$ depends on the exponent $p$ and on the sequences $\cbr[0]{p^{(D)}_{n_D}}_{n_D \in \Z_+}$.

\end{lemma}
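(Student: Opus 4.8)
The plan is to deduce the lemma from the $l^2$-valued version of Corollary~\ref{thm:gundy_cor} (the one taking $l^2$-valued martingales into scalar measurable functions, guaranteed by the Remark following the corollary), applied to $G$. For $1 < p \leq 2$ the domain $L^p(l^2_{\N \x \Z_+^D})$ may be identified with $\c{L}^p(l^2)$ via the one-to-one correspondence between functions and martingales, so once the two hypotheses of the corollary are verified we immediately obtain $G: \c{L}^r(l^2) \to L^r$ bounded for $1 < r \leq 2$ and $G: \c{H}^p(l^2) \to L^p$ bounded for $p \leq 1$, which are exactly the two claimed inequalities. Since $G$ is manifestly linear, it remains to check the hypotheses.

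First I would verify the $\c{L}^2(l^2) \to L^2$ bound by orthogonality. Each summand $v_{a_{k, n}} \, \Delta_{n, l} h_{k, n}$ has Vilenkin--Fourier spectrum contained in $a_{k, n} \dot{+} \delta_{n, l}$: indeed $\Delta_{n, l} h_{k, n}$ is spectrally supported in $\delta_{n, l}$ by~\eqref{eqn:delta_n_l_definition}, and multiplication by $v_{a_{k, n}}$ shifts the spectrum by $a_{k, n}$ because $v_a v_m = v_{a \dot{+} m}$. By hypothesis the sets $\cbr{a_{k, n} \dot{+} \delta_{n, l}}$ are pairwise disjoint, so all summands are pairwise $L^2$-orthogonal. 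Using $\abs{v_{a_{k, n}}} \equiv 1$ and, for each fixed $(k, n)$, the fact that the $\Delta_{n, l}$ with $l \in \Lambda_{k, n}$ are orthogonal projections onto the disjoint spectral blocks $\delta_{n, l} \subseteq \delta_n$, I would obtain
\[
\norm{G h}_{L^2}^2 = \sum_{(k, n) \in \c{A}, \, l \in \Lambda_{k, n}} \norm[1]{\Delta_{n, l} h_{k, n}}_{L^2}^2 \leq \sum_{(k, n) \in \c{A}} \norm[1]{\Delta_n h_{k, n}}_{L^2}^2 \leq \sum_{(k, n) \in \c{A}} \norm[1]{h_{k, n}}_{L^2}^2 = \norm{h}_{\c{L}^2(l^2)}^2 .
\]

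Next I would check the support condition. Let $h = \cbr{h_{k, n}}$ be an $l^2$-valued martingale with $h_0 = 0$ satisfying, componentwise, $\Delta_{n, l} h = \1_{e_n} \Delta_{n, l} h$ for every $n$ and every admissible $l$, with $e_n \in \c{F}_{n - \v{1}}$. Reading the $(k, n)$-th component at the martingale index $n$ (the second entry of $(k, n)$) gives $\Delta_{n, l} h_{k, n} = \1_{e_n} \Delta_{n, l} h_{k, n}$, so $\1_{e_n}$ factors out and each summand $v_{a_{k, n}} \Delta_{n, l} h_{k, n}$ of $G h$ is supported inside $e_n$. Since $\Lambda_{k, n} \subseteq [1, p^{(1)}_{n_1}-1] \x \ldots \x [1, p^{(D)}_{n_D}-1]$ is nonempty only when $p^{(d)}_{n_d} \geq 2$ for every $d$, i.e. only when $n > 0$, every index $n$ contributing to $G h$ satisfies $n \neq 0$, whence $\cbr{\abs{G h} > 0} \subseteq \bigcup_{n \in \Z_+^D \setminus \cbr{0}} e_n$, as required.

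With both hypotheses in hand the corollary closes the argument, the dependence of $C$ on $p$ and on the $\cbr{p^{(d)}_{n_d}}$ being inherited from the constant in Corollary~\ref{thm:gundy_cor}. This proof is short precisely because Corollary~\ref{thm:gundy_cor} does the heavy lifting; the only delicate points—and hence the main (though modest) obstacle—are the spectral bookkeeping that yields the orthogonality in the $L^2$ estimate (verifying that the shift by $v_{a_{k,n}}$ lands the summands in the prescribed disjoint sets) and the correct componentwise reading of the support hypothesis at the index $n$ attached to each component $h_{k,n}$.
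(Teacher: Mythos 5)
Your proposal is correct and follows essentially the same route as the paper: both reduce the lemma to the $l^2$-valued version of Corollary~\ref{thm:gundy_cor}, establish the $\c{L}^2 \to L^2$ bound via Plancherel and the pairwise disjointness of the spectra $a_{k,n} \dot{+} \delta_{n,l}$, and verify the support hypothesis by reading the condition $\Delta_{n,l} f = \1_{e_n}\Delta_{n,l} f$ componentwise at the index $n$ attached to $h_{k,n}$. Your writeup is if anything slightly more explicit than the paper's (the spelled-out $L^2$ chain and the remark that $\Lambda_{k,n}$ is empty when $n = 0$), but there is no substantive difference.
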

\begin{proof}
Thanks to the one-to-one correspondence between martingales and measurable functions from $L^p$ for $1 < p \leq 2$, operator $G$ can be viewed as an operator mapping $l^2(\N \x \Z_+^D)$-valued martingales into measurable functions.

We will prove that $G$ satisfies the generalization of Corollary~\ref{thm:gundy_cor} to a case of $l^2(\N \x \Z_+^D)$-valued martingales.
As it was mentioned before, such generalization is straightforward.

First, it is obvious that $G$ is linear.
The Plancherel theorem and the fact that $\cbr{a_{k, n} \dot{+} \delta_{n, l}}_{(k, n) \in \c{A}, l \in \Lambda_{k, n}}$ is a collection of pairwise non-intersecting subsets of $\Z_+^D$ show that $G$ is bounded on $L^2$.

Finally, consider an $l^2(\N \x \Z_+^D)$-valued martingale $f$ for which $\Delta_0 f = 0$ and $\Delta_{n, l} f = \1_{e_n} \Delta_{n, l} f$ where $e_n \in \c{F}_{n - \v{1}}$.
Let us check that $\cbr{\abs{G f} > 0} \subseteq \bigcup_{n \in \Z_+^D \setminus \cbr{0}} e_n$.
Write
\<
\cbr{\abs{G f} > 0}
&\subseteq
\bigcup_{\substack{(k, n) \in \c{A}, \\ l \in \Lambda_{k, n}}}
    \cbr{
        \abs{
            v_{a_{k, n}}
            \Delta_{n, l} f_{k, n}
        }
        >
        0
    }
=
\bigcup_{\substack{(k, n) \in \c{A}, \\ l \in \Lambda_{k, n}}}
    \cbr{
        \abs{
            \Delta_{n, l} f_{k, n}
        }
        >
        0
    }
\\
&=
\bigcup_{\substack{(k, n) \in \c{A}, \\ l \in \Lambda_{k, n}}}
    \cbr{
        \abs{
            \1_{e_n}
            \Delta_{n, l} f_{k, n}
        }
        >
        0
    }
\subseteq
\bigcup_{(k, n) \in \c{A}}
    \cbr{
        \abs{
            \1_{e_n}
        }
        >
        0
    }
\\
&\subseteq
\bigcup\limits_{n \in \Z_+^D  \setminus \cbr{0}}
e_n
.
\>
This proves the claim.
\end{proof}

%%%%%%%%%%%%%%%%%%%%%%%%%%%%%%%%%%%%%%%%%%%%%%%%%%
%%%%%%%%%%%%%%%%%%%%%%%%%%%%%%%%%%%%%%%%%%%%%%%%%%
%%%%%%%%%%%%%%%%%%%%%%%%%%%%%%%%%%%%%%%%%%%%%%%%%%
%%%%%%%%%%%%%%%%%%%%%%%%%%%%%%%%%%%%%%%%%%%%%%%%%%
%%%%%%%%%%%%%%%%%%%%%%%%%%%%%%%%%%%%%%%%%%%%%%%%%%
\section{A partition of an interval}
\label{sec:partition}
%%%%%%%%%%%%%%%%%%%%%%%%%%%%%%%%%%%%%%%%%%%%%%%%%%
%%%%%%%%%%%%%%%%%%%%%%%%%%%%%%%%%%%%%%%%%%%%%%%%%%
%%%%%%%%%%%%%%%%%%%%%%%%%%%%%%%%%%%%%%%%%%%%%%%%%%
%%%%%%%%%%%%%%%%%%%%%%%%%%%%%%%%%%%%%%%%%%%%%%%%%%
%%%%%%%%%%%%%%%%%%%%%%%%%%%%%%%%%%%%%%%%%%%%%%%%%%

Here we will describe a method of dividing an interval $[a, b) \subseteq \Z_+$ of integers into subintervals that behave favorably under shifts induced by the operation $\dot{+}$ defined in Section~\ref{sec:prelim_vilenkin}.
This method, suggested by Tselishchev in~\cite{tselishchev2021}, is a key combinatorial component of the proof of the main theorem.

Throughout the section we will work exclusively in the one-parameter setting and assume a fixed (one-parameter) mixed-radix system $\v{p} = \cbr{p_n}_{n \in \N}$.

Consider an interval $I = [a, b) \subseteq \Z_+$.
We will find a decomposition
\< \label{eqn:final_partition}
&I
=
\del[1]{\tilde{J}_0 \cup \dots \cup \tilde{J}_t} \cup \del[1]{J_1 \cup \dots \cup J_{t-1}}
, &\text{where}
\\ \label{eqn:parition_properties}
&\tilde{J}_j \ \dot{-}\  a = \cup_{l \in \Lambda(\tilde{J}_j)} \delta_{j, l},
&J_j \ \dot{-}\  b = \cup_{l \in \Lambda(J_j)} \delta_{j, l}.
\>
Here $\dot{-}$ relates to $\dot{+}$ in the same way as the standard subtraction relates to the standard addition; when one of the operands of $\dot{-}$ is a set, it is simply applied element-wise yielding another set.
The intervals $\delta_{j, l}$ are as in~\eqref{eqn:delta_n_l_definition} and the particular index sets $\Lambda(\tilde{J}_j)$ and $\Lambda(J_j)$ will, though irrelevant to the proof of the main theorem, be described in the course of building the decomposition.

\begin{figure}
\centering
    \begin{tikzpicture}[every edge/.style={shorten <=1pt, shorten >=1pt}]
    \draw (0,0)  node [below] {} -- (10,0) node [below=2pt] {\footnotesize $1234$};
    % draw the tick marks
    \coordinate (p) at (0,2pt);
    \foreach \myprop/\mytext/\mylabel [count=\n] in {5.0/$\xcancel{J_4}$/$0$,2.0/$J_3$/$1000$,1.6/$J_2$/$1200$, 1.4/$J_1$/$1230$}
    \draw [decorate,decoration={brace,amplitude=4}] (p)  edge [draw] +(0,-4pt) edge node {\footnotesize \mylabel} +(0,-21pt)  -- ++(\myprop,0) coordinate (p) node [midway, above=2pt, anchor=south] {\mytext};
    \coordinate (p) at (1.75,-2pt);
    \foreach \myprop/\mytext/\mylabel [count=\n] in {0.5/$\tilde{J}_0$/$567$,0.75/$\tilde{J}_1$/$568$,0.875/$\tilde{J}_2$/$570$, 1.125/$\tilde{J}_3$/$600$}
    \draw [decorate,decoration={brace,amplitude=4,mirror}] (p)  edge [draw] +(0,4pt) edge node {\footnotesize \mylabel} +(0,-15pt)  -- ++(\myprop,0) coordinate (p) node [midway, below=8pt, anchor=north] {\mytext};
    \path (10,2pt) edge [draw]  ++(0,-4pt);
    \end{tikzpicture}
    \caption{The decomposition of interval $[a, b) = [567, 1234)$ with respect to the Vilenkin system corresponding to the $p_n \equiv 10$. We start by decomposing the interval~$[0, 1234)$ into $[1230, 1234) \cup [1200, 1230) \cup [1000, 1200) \cup [0, 1000)$. Then, since $a = 567 \in [0, 1000)$, we decompose additionally the interval~$[567, 1000)$ into  $\cbr{567} \cup [568, 570) \cup [570, 600) \cup [600, 1000)$. Note the digit-wise fashion of this decomposition.}
    \label{fig:partition}
\end{figure}
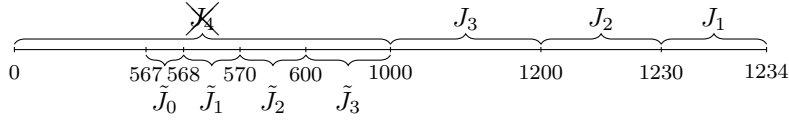

Consider mixed-radix representations of $a$ and $b$, as  in~\eqref{eqn:vilenkin_decomposition_integer},
\<
a &= \alpha_{k+1} \cdot P_{k} + \alpha_k P_{k-1} + \dots + \alpha_2 P_1 + \alpha_1,
\\
b &= \beta_{k+1} \cdot P_{k} + \beta_k P_{k-1} + \dots + \beta_2 P_1 + \beta_1,
\>
where $\alpha_j, \beta_j$ are integers with $0 \leq \alpha_k, \beta_k < p_k$ and $\beta_{k+1} > 0$ so that $\beta_{k+1}$ is the most significant digit of $b$.
Define $b_j = \beta_{k+1} P_k + \ldots + \beta_{j+1} P_j$ for $0 \leq j \leq k+1$.
This is just the number $b$ with its $j$ least significant digits set to zero. Define $a_j$ analogously.
We start by partitioning the interval $[0, b)$.
\[ \label{eqn:first_partition}
[0, b) = {\bigcup}_{j=1}^{k+1} J_j, ~~\t{where} J_j = [b_j, b_{j-1}).
\]
Note that $\abs{J_j} = \beta_j P_{j-1}$ and that $J_j$ consists of the numbers whose $j$-th digit $\gamma_j$ satisfies $0 \leq \gamma_j < \beta_j$ and whose less significant digits are arbitrary.
Naturally, there exists $1 \leq t \leq k+1$ such that $a \in J_t$, which implies $\alpha_j = \beta_j$ for $t+1 \leq j \leq k+1$ and $\alpha_t < \beta_t$.
We proceed to partition $[a, +\infty) \cap J_t$.
\< \label{eqn:second_partition}
&[a, +\infty) \cap J_t = {\bigcup}_{j=0}^{t} \tilde{J}_j, ~~\t{where}
\\
&\tilde{J}_j = [a_j + (\alpha_j + 1) P_{j-1}, a_j + p_j P_{j-1}) ~~\t{for} 1 \leq j \leq t-1,
\\
&\tilde{J}_t = [a_t + (\alpha_t + 1) P_{t-1}, a_t + \beta_t P_{t-1}), \qquad \tilde{J}_0 = \cbr{a}.
\>
Intervals $\tilde{J}_j$ consist of numbers whose $j$-th digit $\gamma_j$ satisfies $\alpha_j < \gamma_j < p_j$ for $1 \leq j \leq t - 1$ or $\alpha_j < \gamma_j < \beta_j$ for $j = t$, whose less significant digits are arbitrary and whose more significant digits coincide with those of $a$.
Note that both $J_j$ and $\tilde{J}_j$ are allowed to be empty.

Combining the partition of $[0, b)$ in~\eqref{eqn:first_partition} with the partition of $[a, +\infty) \cap J_t$ in~\eqref{eqn:second_partition}, we finally get the partition of $[a, b)$ in~\eqref{eqn:final_partition}, where the relations~\eqref{eqn:parition_properties} follow by direct computation.
The partition process is illustrated on Figure~\ref{fig:partition}.

%%%%%%%%%%%%%%%%%%%%%%%%%%%%%%%%%%%%%%%%%%%%%%%%%%
%%%%%%%%%%%%%%%%%%%%%%%%%%%%%%%%%%%%%%%%%%%%%%%%%%
%%%%%%%%%%%%%%%%%%%%%%%%%%%%%%%%%%%%%%%%%%%%%%%%%%
%%%%%%%%%%%%%%%%%%%%%%%%%%%%%%%%%%%%%%%%%%%%%%%%%%
%%%%%%%%%%%%%%%%%%%%%%%%%%%%%%%%%%%%%%%%%%%%%%%%%%
\section{Proof of the main theorem}
\label{sec:proof}
%%%%%%%%%%%%%%%%%%%%%%%%%%%%%%%%%%%%%%%%%%%%%%%%%%
%%%%%%%%%%%%%%%%%%%%%%%%%%%%%%%%%%%%%%%%%%%%%%%%%%
%%%%%%%%%%%%%%%%%%%%%%%%%%%%%%%%%%%%%%%%%%%%%%%%%%
%%%%%%%%%%%%%%%%%%%%%%%%%%%%%%%%%%%%%%%%%%%%%%%%%%
%%%%%%%%%%%%%%%%%%%%%%%%%%%%%%%%%%%%%%%%%%%%%%%%%%

Here we will finally prove the main theorem using the tools that were introduced thus far.
Let us first restate the theorem.
\vilenkinrdftwodim*

\begin{proof}

We will only present proof for $D = 2$ here.
While a particular $D$ does seriously affect the technique needed to prove Lemma~\ref{thm:G_is_bounded} (cf. the discussion in the preliminary Section~\ref{sec:prelim_boundedness}), it does not affect the combinatorial argument used to derive the main theorem from this lemma.
Note that symbol $D$ will be redefined in this proof to mean something not at all related to the number of parameters.

As in \cite{osipov2017}, we partition rectangles $I_k$ into fragments that behave well under shifts induced by operation $\dot{+}$.
This, together with Lemma~\ref{thm:G_is_bounded} and Lemma~\ref{thm:modified_s_bounded}, will allow us to prove the claim. 
Let
\[
I_k= I_k^1 \x I_k^2 = [a_k^{(1)}, b_k^{(1)}-1] \x [a_k^{(2)}, b_k^{(2)}-1]
.
\]

We build the partition of $I_k$ by forming the direct product of partitions of intervals $I_k^1$ and $I_k^2$, while partitioning these individual intervals as in Section~\ref{sec:partition}.
To partition a rectangle $I = I^1 \times I^2 = [a^{(1)}, b^{(1)}-1] \x [a^{(2)}, b^{(2)}-1] \subseteq \Z_+^2$ we partition each interval $I^i$ into the union of intervals $\tilde{J}^{(i)}_j$ and $J^{(i)}_j$ as by Section~\ref{sec:partition} and consider all direct products, yielding
\[
I
=
\del[3]{
    \bigcup\limits_{j}
    A_j
}
\cup
\del[3]{
    \bigcup\limits_{j}
    B_j
}
\cup
\del[3]{
    \bigcup\limits_{j}
    C_j
}
\cup
\del[3]{
    \bigcup\limits_{j}
    D_j
}
,
\]
where
\<
A_j
&=
\tilde{J}_{j^{\del{1}}}^{\del{1}}
\x
\tilde{J}_{j^{\del{2}}}^{\del{2}}
,
\qquad
B_j
=
J_{j^{\del{1}}}^{\del{1}}
\x
J_{j^{\del{2}}}^{\del{2}}
,
\\
C_j
&=
J_{j^{\del{1}}}^{\del{1}}
\x
\tilde{J}_{j^{\del{2}}}^{\del{2}}
,
\qquad
D_j
=
\tilde{J}_{j^{\del{1}}}^{\del{1}}
\x
J_{j^{\del{2}}}^{\del{2}}
,
\>
wherein a superscript refers to a partition of which of the segments $I^1$, $I^2$ does the indexed object belong.
The most important property of rectangles $A_j, B_j, C_j, D_j$ is that they can be shifted to become a union of rectangles $\delta_{k, l}$ by a constant (depending only on the number of parameters) number of shifts.
Define $a, b, c, d$ to be the vertices of rectangle $I$, that is
\[
a = \del[0]{a^{\del{1}}, a^{\del{2}}}
,~~
b = \del[0]{b^{\del{1}}, b^{\del{2}}}
,~~
c = \del[0]{b^{\del{1}}, a^{\del{2}}}
,~~
d = \del[0]{a^{\del{1}}, b^{\del{2}}}
,
\]
then, defining the index sets (with $\Lambda$ of $\tilde{J}^{(i)}_j$ and $J^{(i)}_j$ as in Section~\ref{sec:partition})
\<
&\Lambda(A_j) = \Lambda(\tilde{J}^{(1)}_j) \x \Lambda(\tilde{J}^{(2)}_j), &\Lambda(B_j) = \Lambda(J^{(1)}_j) \x \Lambda(J^{(2)}_j),
\\
&\Lambda(C_j) = \Lambda(J^{(1)}_j) \x \Lambda(\tilde{J}^{(2)}_j), &\Lambda(D_j) = \Lambda(\tilde{J}^{(1)}_j) \x \Lambda(J^{(2)}_j),
\>
we have, with notation $\spec f = \Set{n \in \Z_+^2}{\innerprod{f}{v_n} \not= 0}$, that
\<
\label{eqn:spec_prop_1}
{\sum}_{l \in \Lambda(A_j)} \Delta_{j, l}
v^{-1}_{a}
f
&=
v^{-1}_{a}
f
, ~~~\t{if}~ \spec{f} \subseteq A_j,
\\
\label{eqn:spec_prop_2}
{\sum}_{l \in \Lambda(B_j)} \Delta_{j, l}
v^{-1}_{b}
f
&=
v^{-1}_{b}
f
, ~~~\t{if}~ \spec{f} \subseteq B_j,
\\
\label{eqn:spec_prop_3}
{\sum}_{l \in \Lambda(C_j)} \Delta_{j, l}
v^{-1}_{c}
f
&=
v^{-1}_{c}
f
, ~~~\t{if}~ \spec{f} \subseteq C_j,
\\
\label{eqn:spec_prop_4}
{\sum}_{l \in \Lambda(D_j)} \Delta_{j, l}
v^{-1}_{d}
f
&=
v^{-1}_{d}
f
, ~~~\t{if}~ \spec{f} \subseteq D_j.
\>
This behavior under shifts is the basis of the following argument.

Let us similarly partition each $I_k$, adding the additional index $k$ to all objects that arise from this construction.
Then $f_k$ can be represented as sum
\[
f_k
=
\sum\limits_{j} f_{k, j}^A
+
\sum\limits_{j} f_{k, j}^B
+
\sum\limits_{j} f_{k, j}^C
+
\sum\limits_{j} f_{k, j}^D
,
\]
where $\spec f_{k, j}^A \subseteq A_{k, j}$, $\spec f_{k, j}^B \subseteq B_{k, j}$, $\spec f_{k, j}^C \subseteq C_{k, j}$, $\spec f_{k, j}^D \subseteq D_{k, j}$.
Define
\<
g_{k, j}^A
=
v^{-1}_{a_k}
f_{k, j}^A
,~~~
g_{k, j}^B
=
v^{-1}_{b_k}
f_{k, j}^B
,~~~
g_{k, j}^C
=
v^{-1}_{c_k}
f_{k, j}^C
,~~~
g_{k, j}^D
=
v^{-1}_{d_k}
f_{k, j}^D
.
\>
Then $f_k$ may be represented as follows
\[
\begin{aligned}
f_k =
&v_{a_k}
\sum_{j}
\sum_{l \in \Lambda(A_{k, j})}
\Delta_{j, l} g_{k, j}^A
+
v_{b_k}
\sum_{j}
\sum_{l \in \Lambda(B_{k, j})}
\Delta_{j, l} g_{k, j}^B
\\
+
&v_{c_k}
\sum_{j}
\sum_{l \in \Lambda(C_{k, j})}
\Delta_{j, l} g_{k, j}^C
+
v_{d_k}
\sum_{j}
\sum_{l \in \Lambda(D_{k, j})}
\Delta_{j, l} g_{k, j}^D
.
\end{aligned}
\]

Defining $h_{k, n} = g_{k, j(n)}^{S(n)}$, $\Lambda_{k, n} = \Lambda(S(n)_{k, j(n)})$ and $a_{k, n} = s(n)_k$ with appropriate enumeration $j(n) \in \Z_+^2$, $S(n) \in \cbr{A, B, C, D}$ and $s(n) \in \cbr{a, b, c, d}$, we see that $\sum_{k} f_k = G \del[1]{\cbr{h_{k, n}}_{k \in \N, n \in \Z_+^2}}$ and the assumptions of Lemma~\ref{thm:G_is_bounded} are fulfilled (this is justified by nothing other than the properties~\eqref{eqn:spec_prop_1}--\eqref{eqn:spec_prop_4}).
Applying this lemma and then the triangle inequality, gives us
\< \label{eqn:G_application_result}
&\norm{
    \sum_{k} f_k
}_{L^p}
\!\!\!\!\!\lesssim
\Bigg\lVert
    \Bigg(
        \!\!\!
        \sum\limits_{k}
        \!\!
        \Big(
            \!
            \sum\limits_{j}
                \abs{
                    g_{k, j}^A
                }^2
            \!\!+\!\!
            \sum\limits_{j}
                \abs{
                    g_{k, j}^B
                }^2
            \!\!+\!\!
            \sum\limits_{j}
                \abs{
                    g_{k, j}^C
                }^2
            \!\!+\!\!
            \sum\limits_{j}
                \abs{
                    g_{k, j}^D
                }^2
        \Big)
    \Bigg)^{\!\!\frac{1}{2}}
\Bigg\rVert_{L^p}
\\
\label{eqn:after_G_1}
&\lesssim
\Bigg\lVert
    \Bigg(
        \!\!
        \sum\limits_{k, j}
            \sum_{l \in \Lambda(A_{k, j})}
            \!\!
            \abs{
                \Delta_{j, l} g_{k, j}^A
            }^2
    \Bigg)^{\frac{1}{2}}
\Bigg\rVert_{L^p}
\!\!\!\!+\!
\Bigg\lVert
    \Bigg(
        \!\!
        \sum\limits_{k, j}
            \sum_{l \in \Lambda(B_{k, j})}
            \!\!
            \abs{
                \Delta_{j, l} g_{k, j}^B
            }^2
    \Bigg)^{\frac{1}{2}}
\Bigg\rVert_{L^p}
\\
\label{eqn:after_G_2}
&+
\Bigg\lVert
    \Bigg(
        \!\!
        \sum\limits_{k, j}
            \sum_{l \in \Lambda(C_{k, j})}
            \!\!\!
            \abs{
                \Delta_{j, l} g_{k, j}^C
            }^2
    \Bigg)^{\frac{1}{2}}
\Bigg\rVert_{L^p}
\!\!\!\!+
\Bigg\lVert
    \Bigg(
        \!\!
        \sum\limits_{k, j}
            \sum_{l \in \Lambda(D_{k, j})}
            \!\!\!
            \abs{
                \Delta_{j, l} g_{k, j}^D
            }^2
    \Bigg)^{\frac{1}{2}}
\Bigg\rVert_{L^p}
.
\>
Since $G$ is boundeded from $\c{H}^p$ to $L^p$ for $p \leq 1$, we have in this case the inequality similar to~\eqref{eqn:G_application_result} with $\c{H}^p(l^2_{\N \x \Z_+^2})$-norm instead of the $L^p(l^2_{\N \x \Z_+^2})$-norm of the sequence $\cbr{h_{k, n}}$ on the right hand side.
By noting that, thanks to the properties~\eqref{eqn:spec_prop_1}--\eqref{eqn:spec_prop_4}, this $\c{H}^p$-norm coincides with the $L^p$-norm, we see that this inequality holds for $p \leq 1$ just as well.

The next step is to estimate each term of~\eqref{eqn:after_G_1},~\eqref{eqn:after_G_2} separately.
Consider, for example, the third term.
Write
\<
&v^{-1}_{c_k} f_k
=
v_{a_k \dot{-} c_k}
\sum\limits_{j}
g_{k, j}^A
+
v_{b_k \dot{-} c_k}
\sum\limits_{j}
g_{k, j}^B
+
\sum\limits_{j}
g_{k, j}^C
+
v_{d_k \dot{-} c_k}
\sum\limits_{j}
g_{k, j}^D
\\
&=
v_{a_k \dot{-} c_k}
\!
\sum\limits_{j}
g_{k, j}^A
\!+\!
v_{b_k \dot{-} c_k}
\!
\sum\limits_{j}
g_{k, j}^B
\!+\!
\sum\limits_{j}
\!\!
\sum_{l \in \Lambda(C_{k, j})}
\!\!\!\!\!\!
\Delta_{j, l}
g_{k, j}^C
\!+\!
v_{d_k \dot{-} c_k}
\!
\sum\limits_{j}
g_{k, j}^D.
\>
We note that $\Delta_{j, l} v^{-1}_{c_k} f_k = \Delta_{j, l} g_{k, j}^C$, hence in the decomposition
\[
v^{-1}_{c_k} f_k
=
{\sum}_{n \in \Z_+^2} {\sum}_{\v{1} \leq l \leq p_n} \Delta_{n, l} v^{-1}_{c_k} f_k
,
\]
functions $\Delta_{j, l} g_{k, j}^C$ are among the right hand side terms.
It follows then that
\[
\sum\limits_{j}
\sum_{l \in \Lambda(C_{k, j})}
\abs{
    \Delta_{j, l} g_{k, j}^C
}^2
\leq
\sum\limits_{n \in \Z_+^2}
\sum_{\v{1} \leq l \leq p_n}
\abs{
    \Delta_{n, l} v^{-1}_{c_k} f_k
}^2
=
\del[2]{S_m(v^{-1}_{c_k} f_k)}^2
,
\]
where $S_m$ is the modified Littlewood--Paley square function defined in Section~\ref{sec:operators}.
By leveraging the $l^2$-valued analog of Lemma~\ref{thm:modified_s_bounded}, we have\footnote{Equations~\eqref{eqn:s_m_1}--\eqref{eqn:s_m_2} here trivially extend to $p \leq 1$, but Equation~\eqref{eqn:s_m_3} does not.}
\<
\label{eqn:s_m_1}
&\norm[2]{
    \del[2]{
        \sum_{k}
        \sum_{j}
            \sum_{l \in \Lambda(C_{k, j})}
            \abs{
                \Delta_{j, l}
                g_{k, j}^C
            }^2
    }^{1/2}
}_{L^p}
\\
\label{eqn:s_m_2}
\leq
&\norm[2]{
    \del[2]{
        \sum\limits_{k}
            \sum\limits_{n \in \Z_+^2}
                \sum_{l \in \Lambda(C_{k, j})}
                \abs{
                    \Delta_{n, l} v^{-1}_{c_k} f_k
                }^2
    }^{1/2}
}_{L^p}
\!\!\!\!=
\norm{
    S_m\del{
        \cbr{
        v^{-1}_{c_k} f_k
        }_{k \in \N}
    }
}_{L^p}
\\
\label{eqn:s_m_3}
\lesssim
&\norm[2]{
    \del[2]{
        \sum\limits_{k}
        \abs{
            v^{-1}_{c_k} f_k
        }^2
    }^{1/2}
}_{L^p}
=
\norm[2]{
    \del[2]{
        \sum\limits_{k}
        \abs{
            f_k
        }^2
    }^{1/2}
}_{L^p}
.
\>

Analogous to~\eqref{eqn:s_m_1}--\eqref{eqn:s_m_3} we can bound each of the four terms in \eqref{eqn:after_G_1} and \eqref{eqn:after_G_2}.
Gathering these inequalities together, we finally obtain
\[
\norm[2]{\sum\limits_{k} f_k}_{L^p}
\lesssim
\norm[2]{
    \del[2]{
        \sum\limits_{k}
        \abs{
            f_k
        }^2
    }^{1/2}
}_{L^p}
,
\]
which proves the claim.
\end{proof}

\newpage

%%%%%%%%%%%%%%%%%%%%%%%%%%%%%%%%%%%%%%%%%%%%%%%%%%
%%%%%%%%%%%%%%%%%%%%%%%%%%%%%%%%%%%%%%%%%%%%%%%%%%
%%%%%%%%%%%%%%%%%%%%%%%%%%%%%%%%%%%%%%%%%%%%%%%%%%
%%%%%%%%%%%%%%%%%%%%%%%%%%%%%%%%%%%%%%%%%%%%%%%%%%
%%%%%%%%%%%%%%%%%%%%%%%%%%%%%%%%%%%%%%%%%%%%%%%%%%
\section{Some corollaries and extensions}
\label{sec:extensions}
%%%%%%%%%%%%%%%%%%%%%%%%%%%%%%%%%%%%%%%%%%%%%%%%%%
%%%%%%%%%%%%%%%%%%%%%%%%%%%%%%%%%%%%%%%%%%%%%%%%%%
%%%%%%%%%%%%%%%%%%%%%%%%%%%%%%%%%%%%%%%%%%%%%%%%%%
%%%%%%%%%%%%%%%%%%%%%%%%%%%%%%%%%%%%%%%%%%%%%%%%%%
%%%%%%%%%%%%%%%%%%%%%%%%%%%%%%%%%%%%%%%%%%%%%%%%%%

Here we discuss some corollaries and extensions of the main theorem.

\subsection{One-parameter Rubio de Francia inequality for the Walsh system with unusually defined intervals} \label{sec:exotic_intervals}

One of the rather general views upon Rubio de Francia inequality may be through abstract harmonic analysis.
Consider a locally compact abelian group~$G$, denote its Potryagin dual by $\hat{G}$ and fix some partial order $\leq_{\hat{G}}$ on $\hat{G}$. Then,
\1 Lebesgue spaces $L^p(G)$ may be defined through the Haar measure on $G$,
\2 operators $M_I$ for sets $I \subseteq \hat{G}$ may be defined as the abstract Fourier transform multipliers with symbol $\1_{I}$,
\3 generalized intervals may be defined as $I = [a, b] = \Set{x \in \hat{G}}{a \leq_{\hat{G}} x \leq_{\hat{G}} b}$,
\0
substituting these into the classical inequality~\eqref{eqn:rdf_classic}, we may formulate a relation that can be called the Rubio de Francia inequality on $G$ with fixed order $\leq_{\hat{G}}$ on the dual group.
Whether the inequality holds for a particular $G$ is a non-trivial question, but its sides are well-defined in this general setting.

Taking this point of view, it may be tempting to assume that since a multi-parameter Vilenkin group is always isomorphic to a one-parameter Vilenkin group, we may prove Rubio de Francia inequality for the former by reducing it to Rubio de Francia inequality for the latter.
Although this path can be taken, it is not trivial, because the group isomorphism must also preserve the generalized interval structure (through the order of the dual group).
Proving Rubio de Francia inequality for the multi-parameter Vilenkin system by means of such group-theoretic machinery is out of scope of this work.
However, by running this machinery the other way around, we may obtain Rubio de Francia inequality for the one-parameter Vilenkin system with unusual alternative notions of the interval, which we demonstrate here on a simple example.

Consider the Cantor Dyadic Group and denote it by $C$.
This is a Vilenkin group corresponding to the sequence $\cbr{p_n}_{n \in \N}$ with $p_n \equiv 2$.
This is a compact abelian group.
Moreover, it may be identified with the interval $[0,1)$ by a measure-preserving transform.
For this reason, we may consider the characters of this group, elements of $\hat{C}$, as functions acting on $[0,1)$, namely the Walsh functions, which are a special case of the Vilenkin functions.
As by Section~\ref{sec:prelim_vilenkin}, Walsh functions may be enumerated by non-negative integers $\Z_+$ and ordered accordingly.
This yields Rubio de Francia inequality for the one-parameter Walsh system that was proved by Osipov in \cite{osipov2017}.

Consider now the square $C \x C$ of the Cantor Dyadic Group and identify it with $[0,1) \x [0,1)$ by a measure-preserving transform.
Its characters can be interpreted as two-parameter Walsh functions indexed by $\Z_+ \x \Z_+$ with the partial order $(n, m) \leq_{\Z_+ \x \Z_+} (k, l) \iff n \leq k ~\&~ m \leq l$ that gives rise to the generalized intervals that coincide with the rectangles of $\Z_+ \x \Z_+$.
Rubio de Francia inequality corresponding to this setting was proved by the author in~\cite{borovitskiy2022littlewood} and is a special case of the main theorem of this paper.

Consider the isomorphism $\Phi: C \x C \to C$ given by
\[
\Phi((x_1, x_2, \dots), (y_1, y_2, \dots)) = (x_1, y_1, x_2, y_2, \dots).
\]
It is easy to check that it preserves the group structure, topological structure and the measure.
It also defines an isomorphism between the dual groups: a character $w_{n, m}(x_1, x_2)$ of $C \x C$ corresponds to the character $w_{\Psi(n, m)}(\Phi(x_1, x_2))$ of $C$.
It can be checked that
\[
\Psi(n, m)
=
\Psi\del[1]{{\sum}_{k=1}^\infty n_k \cdot 2^{k-1}, {\sum}_{k=1}^\infty m_k \cdot 2^{k-1}}
=
{\sum}_{k=1}^\infty l_k \cdot 2^{k-1},
\]
where $l_k = n_k$ for odd $k$ and $l_k = m_k$ for even $k$.

This isomorphism takes the natural partial ordering of $\Z_+ \x \Z_+$ into an exotic ordering of $\Z_+$.
Thus, from the two-parameter Rubio de Francia inequality, there follows the one-parameter inequality with intervals replaced by the generalized intervals corresponding to this exotic ordering.
For instance, a set $\cbr{5, 16, 17, 20}$ will be a valid generalized interval, since it is the image of the interval $[(3, 0), (6, 0)] = [3, 6] \x \cbr{0} \subseteq \Z_+^2$ under the isomorphism $\Psi$.

Although here we restrict ourselves to this particular example, it is quite clear that this method enables one to show a plethora of exotic inequalities of this kind.

\subsection{A no-go result for arbitrary partitions of the set~$\Z_+^D$} \label{sec:impossible}

It is very natural to ask whether or not the multi-parameter Rubio de Francia inequality for Vilenkin systems studied in this paper may be extended from the case of partitions of the set~$\Z_+^D$ into arbitrary rectangles to partitions into arbitrary sets.
Here we present an argument showing that this is impossible.

For simplicity, let us only consider the case of the Walsh system.
We will use the ideas from Section~\ref{sec:exotic_intervals} and take inspiration in the argument of T.~Tao\footnote{The argument is unpublished but available online at~\cite{tao2021}.} that settles the same question for the case of the group $\T^D$.

Consider some $1 < p <2$, $D \in \N$ and suppose that
\[ \label{eqn:impossible:main_ineq}
\norm[1]{{\sum}_k f_k}_{L^p([0,1)^D)} \leq C_D \norm[1]{\cbr{f_k}}_{L^p([0,1)^D,\, l^2)}
\]
for all $f_k$ with Walsh--Fourier spectra in arbitrary sets $I_k \subseteq \Z_+^D$ with $\cup_k I_k = \Z_+^D$.

By considering only sets of form $I_k = I^0_k \x \Z_+^{D-1}$ we get the one-parameter Rubio de Francia inequality for the Walsh system and arbitrary partitions of~$\Z_+$.
Thus, without loss of generality, we may assume $D=1$.

Using the machinery from~\ref{sec:exotic_intervals} it is easy to show that our assumptions imply the Rubio de Franica inequality for arbitrary rectangles in~$\Z_+^D$ with the constant~$C_1$ independent of dimension $D$: the rectangles in $\Z_+^D$ correspond to some exotic intervals in $\Z_+$ and for them the inequality is true by assumption.

Let us show that from this we can infer the one-parameter Rubio de Francia inequality for the Walsh system with the constant $C_1 \leq 1$.
Of course it suffices to verify this only when the number of non-zero functions $f_k$ is finite.
Obviously, each of these functions is a finite linear combination of Walsh functions (an analog of a trigonometric polynomial) and the spectra of $f_k$ are intervals in $\Z_+$.

Denote the number of non-zero $f_k$ by $N$.
Then $k$ runs through the finite set~$1, \ldots, N$.
Consider the functions $f_{\v{k}}$, $\v{k} \in \cbr{1, \ldots, N}^D$ defined by the expression $f_{\v{k}}(x_1, \ldots, x_D) = f_{\v{k}_1}(x_1) \cdot \ldots \cdot f_{\v{k}_D}(x_D)$.
It is easy to see that the Walsh--Fourier spectra of $f_{\v{k}}$ are non-intersecting rectangles.

Substituting $f_{\v{k}}$ into the Rubio de Francia inequality for arbitrary rectangles in $\Z_+^D$ with the constant $C_1$ that we have obtained above and using, respectively,
\[
\sum_{\v{k} \in \cbr{1, \ldots, N}^D} \!\!\!\!\!\!\!\! f_{\v{k}}(x_1, \ldots, x_D)
&=
\del[2]{\,\sum_{k_1 = 1}^N f_k(x_1)} \cdot \ldots \cdot \del[2]{\,\sum_{k_D = 1}^N f_k(x_D)},
\\
\sum_{\v{k} \in \cbr{1, \ldots, N}^D} \!\!\!\!\!\!\!\! \abs{f_{\v{k}}(x_1, \ldots, x_D)}^2
&=
\del[2]{\,\sum_{k_1 = 1}^N \abs{f_k(x_1)}^2} \cdot \ldots \cdot \del[2]{\,\sum_{k_D = 1}^N \abs{f_k(x_D)}^2},
\]
to transform the left- and right-hand sides of the inequality, we see
\[
\norm[1]{{\sum}_k f_k}_{L^p([0,1))}^D \leq C_1 \norm[1]{\cbr{f_k}}_{L^p([0,1),\, l^2)}^D.
\]
It immediately follows that $C_1$ may be taken such that $C_1 \leq 1$.

Now we get the contradiction with the fact that Rubio de Francia inequality and even Littlewood--Paley inequality for the Walsh system do not hold with the constant $C_1 \leq 1$.
Indeed, the exact constant in the Littlewood--Paley inequality for the Walsh system is known to be greater than one.
See, for instance, the book \cite[Theorem 8.8]{oskekowski2012} as a reference on the subject.
Let us note that the inequality $C_1 \leq 1$ may be disproved without using the nontrivial results on exact constants, similarly to how it is done by Tao~\cite{tao2021}.

\subsection{The case of exponents $p \leq 1$}

Looking carefully at the remarks made during the proof of the main theorem in Section~\ref{sec:proof}, we can see that there can be formulated a weaker version of Rubio de Francia inequality for exponents $p \leq 1$.
Formally, the following is true.

\begin{theorem}
Let $I_k = I_k^1 \x \ldots \x I_k^D$ be a family of disjoint rectangles in~$\Z_+^D$.
Let $f_k: [0,1)^D \to \R$ be some functions with Vilenkin--Fourier spectrum in $I_k$, where $v_{n_d}$ are Vilenkin functions corresponding to a bounded Vilenkin system (which may be different for different values of $d$).
Then
\[
\norm[2]{{\sum}_k f_k}_{L^p}
\leq
C
\sum_{a = 1}^{2^D} \norm[2]{\cbr{v^{-1}_{\tau_{a, k}} f_k}_{k \in \N}}_{\c{H}^p(l^2_\N)}
,
\qquad p \leq 1
,
\]
where $\tau_{a, k}$ is the $a$-th corner of rectangle $I_k$, the constant $C$ does not depend on the choice of rectangles $\cbr{I_k}$ or functions $\cbr{f_k}$  and $\c{H}^p$ is the martingale Hardy space defined in Section~\ref{sec:prelim_spaces}.
\end{theorem}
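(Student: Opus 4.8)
The plan is to recognize this inequality as a checkpoint \emph{inside} the proof of the main theorem (Section~\ref{sec:proof}): I would retrace that proof line by line up to the invocation of the modified square function, but stop one step short of the passage from $S_m$ to the $L^p(l^2)$ norm. Concretely, I would first partition each rectangle $I_k$ exactly as in Section~\ref{sec:proof}. For general $D$ this yields $2^D$ corner--types in place of the four blocks $A_j, B_j, C_j, D_j$, one type for each corner $\tau_{a,k}$ of $I_k$. Writing ${\sum}_k f_k = G(\cbr{h_{k,n}})$ via the character shifts $v^{-1}_{\tau_{a,k}}$ (justified by the shift properties~\eqref{eqn:spec_prop_1}--\eqref{eqn:spec_prop_4}), applying Lemma~\ref{thm:G_is_bounded}, and then using the triangle inequality produces the $D$-dimensional analog of~\eqref{eqn:G_application_result}--\eqref{eqn:after_G_2}, namely
\[
\norm[2]{{\sum}_k f_k}_{L^p}
\lesssim
\sum_{a=1}^{2^D}
\norm[2]{\del[2]{{\sum}_{k,j}{\sum}_{l}\abs{\Delta_{j,l}\, v^{-1}_{\tau_{a,k}} f_k}^2}^{1/2}}_{L^p},
\]
where the inner $l$--sum ranges over the appropriate corner index set $\Lambda$ and I have used the identity $\Delta_{j,l}\, g^{(a)}_{k,j} = \Delta_{j,l}\, v^{-1}_{\tau_{a,k}} f_k$ recorded in Section~\ref{sec:proof}.

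Next, for each fixed corner $a$ I would repeat the estimate~\eqref{eqn:s_m_1}--\eqref{eqn:s_m_2} verbatim: since every $\Delta_{j,l}\, v^{-1}_{\tau_{a,k}} f_k$ appearing above is one of the terms in the full modified--difference expansion of $v^{-1}_{\tau_{a,k}} f_k$, the restricted double sum is pointwise dominated by $\del[1]{S_m(v^{-1}_{\tau_{a,k}} f_k)}^2$, and summing over $k$ bounds the $a$-th summand by $\norm[1]{S_m(\cbr{v^{-1}_{\tau_{a,k}} f_k}_{k \in \N})}_{L^p}$ in terms of the $l^2$--valued modified square function. This is exactly the first two lines~\eqref{eqn:s_m_1}--\eqref{eqn:s_m_2} of the main proof; the sole departure is that I would \emph{not} continue on to~\eqref{eqn:s_m_3}.

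Finally I would invoke the $l^2$--valued form of Lemma~\ref{thm:modified_s_bounded}. For $1 < p \leq 2$ it asserts that $S_m : \c{L}^p \to L^p$ is bounded, and since $\c{H}^p$ and $\c{L}^p$ coincide with equivalent norms in this range (Section~\ref{sec:prelim_spaces}), this gives
\[
\norm[2]{S_m\del[1]{\cbr[1]{v^{-1}_{\tau_{a,k}} f_k}_{k \in \N}}}_{L^p}
\leq
C\,\norm[2]{\cbr[1]{v^{-1}_{\tau_{a,k}} f_k}_{k \in \N}}_{\c{H}^p(l^2_\N)}
\]
for every $a$. Summing over the $2^D$ corners yields the claimed inequality, with a constant depending only on $p$ and on the sequences $\v{p}_d$, hence independent of the $I_k$ and the $f_k$.

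I do not expect a genuine obstacle: the statement is a way--station in the proof of the main theorem, and for $1 < p \leq 2$ it is in fact \emph{equivalent} to it. Indeed, each character $v^{-1}_{\tau_{a,k}}$ has modulus one, so $\norm[1]{\cbr{v^{-1}_{\tau_{a,k}} f_k}}_{\c{H}^p(l^2_\N)} \sim \norm[1]{\cbr{v^{-1}_{\tau_{a,k}} f_k}}_{L^p(l^2)} = \norm[1]{\cbr{f_k}}_{L^p(l^2)}$, whence the right-hand side collapses to $2^D C \norm[1]{\cbr{f_k}}_{L^p(l^2)}$ and the inequality is nothing more than Theorem~\ref{th:main_theorem} with a larger constant. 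The only point demanding care is bookkeeping: the $2^D$ corner--types for general $D$ replace the four explicit blocks written out for $D=2$ in Section~\ref{sec:proof}, but this is purely notational and leaves the argument intact.
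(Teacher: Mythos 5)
Your retracing of Section~\ref{sec:proof} is indeed the route the paper intends, but your final step proves the theorem only in the range where it says nothing new, and misses the range that is its actual content. The displayed condition ``$1 < p \leq 2$'' in the statement is evidently a misprint: the subsection is titled ``The case of exponents $p \leq 1$,'' the sentence introducing the theorem calls it ``a weaker version of Rubio de Francia inequality for exponents $p \leq 1$'' (cf.\ also the abstract, and the closing remark that the Lebesgue-norm analogue is \emph{open} for $p \leq 1$), and, as you yourself observe, for $1 < p \leq 2$ the inequality collapses to Theorem~\ref{th:main_theorem} with a larger constant, so a separate theorem would be pointless there --- that observation should have been the red flag. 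Your concluding move (boundedness of $S_m$ on $\c{L}^p$ plus the equivalence $\c{H}^p \cong \c{L}^p$) is exactly what is unavailable for $p \leq 1$, as is your remark that the $\c{H}^p(l^2_\N)$-norms collapse to $L^p(l^2)$-norms because $\abs{v_{\tau_{a,k}}} = 1$: multiplication by a character does not preserve the martingale square function, hence not the $\c{H}^p$-norm, when $p \leq 1$ --- which is precisely why the $2^D$ shifted corners $\tau_{a,k}$ appear in the right-hand side at all.

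The paper's proof (the ``remarks made during the proof of the main theorem'') runs your scheme through the $p \leq 1$ branches instead: (i) apply the ``additionally, for $p \leq 1$'' clause of Lemma~\ref{thm:G_is_bounded}, which bounds $\norm{G h}_{L^p}$ by $\norm{h}_{\c{H}^p(l^2)}$, and then identify this $\c{H}^p(l^2)$-norm with the $L^p(l^2)$-norm of $\cbr{h_{k,n}}$ --- legitimate because, by the spectral properties~\eqref{eqn:spec_prop_1}--\eqref{eqn:spec_prop_4}, each $h_{k,n}$ has spectrum inside the single block $\delta_{j(n)} \supseteq \delta_{j(n),l}$, so the martingale square function of $\cbr{h_{k,n}}$ equals $\norm{\cbr{h_{k,n}}}_{l^2}$ pointwise; (ii) replace the triangle inequality by the $p$-triangle inequality, since $L^p$ is only a quasi-norm here (the $2^D$ summands cost only a constant); (iii) keep \eqref{eqn:s_m_1}--\eqref{eqn:s_m_2} exactly as you do, but then invoke the $\c{H}^p \to L^p$ clause of Lemma~\ref{thm:modified_s_bounded} rather than the $\c{L}^p$ one, obtaining $\norm{S_m(\cbr{v^{-1}_{\tau_{a,k}} f_k}_{k})}_{L^p} \lesssim \norm{\cbr{v^{-1}_{\tau_{a,k}} f_k}_{k}}_{\c{H}^p(l^2_\N)}$, which is the stated right-hand side. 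With these three substitutions your argument becomes the paper's proof and covers $0 < p \leq 1$ (indeed all $0 < p \leq 2$); without them it establishes only the trivial range.
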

\begin{proof}[Proof outline]
Repeat the proof from Section~\ref{sec:proof} up to (and including) Equation~\ref{eqn:s_m_2}.
Everything there remains valid for $p \leq 1$, as noted in the proof itself.
However, since $S_m$ is only bounded from $\c{H}^p$ to $L^p$ for $p \leq 1$, we cannot obtain Equation~\eqref{eqn:s_m_3}.
Instead, in this case we have
\[
\norm[2]{
    \del[2]{
        \sum_{k}
        \sum_{j}
            \sum_{l \in \Lambda(C_{k, j})}
            \abs{
                \Delta_{j, l}
                g_{k, j}^C
            }^2
    }^{1/2}
}_{L^p}
\leq
\norm[2]{\cbr{v^{-1}_{c_k} f_k}_{k \in \N}}_{\c{H}^p(l^2_\N)}
.
\]
This, repeated for every term in Equations~\eqref{eqn:after_G_1}--\eqref{eqn:after_G_2}, proves the stated claim.
\end{proof}
It is unknown to the author whether the corresponding result with Lebesgue norm on the right hand side holds true for $p \leq 1$.
Although an analogous statement has been known for the trigonometric Fourier series (or Fourier transform) for some time \cite{kislyakov2005, osipov2010a, osipov2010b}, to the best knowledge of the author, it remains an open problem even for the simplest case of the one-parameter Walsh system.

\section*{Acknowledgements}
Sections 1--3 of this research were financially supported by the Russian Science Foundation grant N\textsuperscript{\underline{o}}18-11-00053 (\emph{https://rscf.ru/project/18-11-00053/}).
The rest of the paper was financially supported by the Foundation for the Advancement of Theoretical Physics and Mathematics ``BASIS''.

The author would like to thank Prof. Sergei Kislyakov, Dr. Nikolay Osipov and Dr. Anton Tselishchev for useful discussions.
The author is also grateful to Prof. Ferenc Weisz who provided some insights and literature directions on the subject of martingale Hardy spaces.

\bibliography{references}

\end{document}